\newcommand{\rrvert}{\vert}
\newcommand{\llvert}{\vert}
\def\N{\mathbf{N}}
\def\Z{\mathbf{Z}}
\def\R{\mathbf{R}}
\def\Hb{{\mathcal H}}
\def\Cc{{\mathcal C}}
\def\Ec{{\mathcal E}}
\def\Bc{{\mathscr B}}
\def\ebf{\mathbf{e}}
\def\tbar{\bar{t}}
\def\tlarge{\bar{t}}
\def\muZ{\mu}
\def\Cyl{\Cc}
\def\Zdiv{\mathscr{Z}}
\def\Tdiv{\mathscr{T}}
\def\Pr{\mathbf{P}}
\newcommand{\E}{\mathbf{E}}
\newcommand{\Var}{\operatorname{Var}}
\newcommand{\ind}{\mathbf{1}}
\newcommand{\diam}{\operatorname{diam}}
\newtheorem{teo}{Theorem}
\newtheorem{lma}[teo]{Lemma}
\newtheorem{cor}[teo]{Corollary}
\newtheorem{prop}[teo]{Proposition}
\begin{document}
\begin{frontmatter}

\title{A Hsu--Robbins--Erd\H{o}s strong law\\ in first-passage percolation}
\runtitle{A Hsu--Robbins--Erd\H{o}s strong law in FPP}

\begin{aug}
\author[A]{\fnms{Daniel} \snm{Ahlberg}\corref{}\ead[label=e1]{ahlberg@impa.br}}
\runauthor{D. Ahlberg}
\affiliation{Instituto Nacional de Matem\'atica Pura e Aplicada}
\address[A]{Instituto Nacional de Matem\'atica Pura e Aplicada\\ 
Estrada Dona Castorina 110\\
22460-320 Rio de Janeiro\\
Brasil\\
\printead{e1}} 
\end{aug}

\received{\smonth{6} \syear{2013}}
\revised{\smonth{1} \syear{2014}}

%
\begin{abstract}
Large deviations in the context of first-passage percolation was first
studied in the early 1980s
by Grimmett and Kesten, and has since been revisited in a variety of
studies. However,
none of these studies provides a \mbox{precise} relation between the
existence of moments of
polynomial order and the decay of probability tails. Such a relation is
derived in this
paper, and is used to strengthen the conclusion of the shape theorem.
In contrast to
its one-dimensional counterpart---the Hsu--Robbins--Erd\H{o}s strong law---this
strengthening is obtained without imposing a higher-order moment condition.
\end{abstract}

%
\begin{keyword}[class=AMS]
\kwd[Primary ]{60K35}
\kwd[; secondary ]{60F10}
\kwd{60F15}
\end{keyword}
\begin{keyword}
\kwd{First-passage percolation}
\kwd{shape theorem}
\kwd{large deviations}
\end{keyword}

\end{frontmatter}

\section{Introduction}

The study of large deviations in first-passage percolation was
pioneered by Grimmett and Kesten~\cite{grikes84}. In their work, they
investigate the rate of convergence of travel times toward the
so-called time constant, by providing some necessary and sufficient
conditions for exponential decay of the probability of linear order
deviations. Although the rate of convergence toward the time constant
has received considerable attention in the literature, there is no
systematic study of the regime for polynomial decay of the probability
tails. This is remarkable since it is precisely in this regime that
strong laws such as the celebrated shape theorem are obtained. In this
paper, we derive a precise characterization of the regime of polynomial
decay in terms of a moment condition. As a consequence, we improve upon
the statement of the shape theorem without strengthening its hypothesis.

Consider the $\Z^d$ nearest-neighbor lattice for $d\ge2$, with
nonnegative i.i.d. random weights assigned to its edges. The random
weights induce a random pseudo-metric on $\Z^d$, known as
first-passage percolation, in which distance between points are given
by the minimal weight sum among possible paths. An important
subadditive nature of these distances, sometimes referred to as \emph
{travel times}, was in the 1960s identified and studied by Hammersley
and Welsh~\cite{hamwel65} and Kingman~\cite{kingman68}. A particular
fact dating back to these early studies is that, under weak conditions,
travel times grow linearly with respect to Euclidean distance between
points, at a rate depending on the direction, and with sublinear
corrections. This asymptotic rate is referred to as the \emph{time constant}.


The work of Grimmett and Kesten on the rate of convergence toward the
time constant was later continued by Kesten himself in~\cite
{kesten86}, and more recently refined in~\cite{chozha03} and~\cite
{cragaumou09}. These studies, just like the present study, investigates
deviations of linear order. Other studies have pursued stronger
concentration inequalities, describing deviations of sublinear order,
notably Kesten~\cite{kesten93}, Talagrand~\cite{talagrand95} and
Benjamini, Kalai and Schramm~\cite{benkalsch03}.
Other authors have considered deviations of linear order for the
related concept of so-called chemical distance in Bernoulli percolation
\cite{antpis96,garmar07}. A common feature among these studies is that
they aim to derive exponential decay of the probability tails for
deviations in coordinate directions, and generally require exponential
decay of the tails of the weights in order to get there. There is no
previous study characterizing the regime of polynomial rate of decay on
the probability tails of linear order deviations. This study attends to
this matter and provides necessary and sufficient conditions for
polynomial rate of decay in terms of a moment condition, valid for all
directions simultaneously. The results obtained strengthens earlier
strong laws in first-passage percolation, in particular the shape
theorem, due to Richardson~\cite{richardson73} and Cox and
Durrett~\cite{coxdur81}, which states the precise conditions under
which the set of points in $\Z^d$ within distance $t$ from
the origin (in the random metric), rescaled by $t$, converges to a
deterministic compact and convex set.

\subsection{The shape theorem}

We will throughout this paper assume that $d\ge2$, as the
one-dimensional case coincides with the study of i.i.d. sequences. Let
$\Ec$ denote the set of edges of the $\Z^d$ lattice, and let $\tau
_e$ denote the random weight associated with the edge $e$ in $\Ec$.
The collection $\{\tau_e\}_{e\in\Ec}$ of weights, commonly referred
to as \emph{passage times}, will throughout be assumed to be
nonnegative and i.i.d. The distance, or \emph{travel time}, $T(y,z)$
between two points $y$ and $z$ of $\Z^d$ is defined as the minimal
path weight, as induced by the random environment $\{\tau_e\}_{e\in
\Ec}$, among paths connecting $y$ and $z$. That is, given a path
$\Gamma$, let $T(\Gamma):=\sum_{e\in\Gamma}\tau_e$ and define
\[
T(y,z):=\inf\bigl\{T(\Gamma)\dvtx \Gamma\mbox{ is a path connecting $y$ and $z$}
\bigr\}.
\]
As mentioned above, travel times grow linearly in comparison with
Euclidean or $\ell^1$-distance on $\Z^d$, denoted below by $|\cdot|$
and $\|\cdot\|$, respectively. The precise meaning of this informal
statement refers to the existence of the limit
%
\begin{equation}
\label{deftimeconstant} \muZ(z):=\lim_{n\to\infty}\frac
{T(0,nz)}{n}\qquad
\mbox{in probability},
\end{equation}
which we refer to as the \emph{time constant}, and which may depend on
the direction. Indeed, the growth is only linear in the case that $\muZ
(z)>0$, which is known to be the case if and only if $\Pr(\tau
_e=0)<p_c(d)$, where $p_c(d)$ denotes the critical probability for bond
percolation on $\Z^d$ (see~\cite{kesten86}).

Existence of the limit in~(\ref{deftimeconstant}) was first obtained
with a moment condition in~\cite{hamwel65}, but indeed exists finitely
without any restriction on the passage time distribution (see \cite
{coxdur81,kesten86}, and the discussion in Appendix~\ref{appmu}
below). Moreover, the convergence in~(\ref{deftimeconstant}) holds
almost surely and in $L^1$ if and only if $\E[Y]<\infty$, where $Y$
denotes the minimum of $2d$ random variables distributed as $\tau_e$,
as a consequence of the subadditive ergodic theorem~\cite{kingman68}.
A more comprehensive result, the shape theorem, provides simultaneous
convergence in all directions. A weak form thereof can be concisely
stated as
%
\begin{equation}
\label{eqweakST} \limsup_{z\in\Z^d\dvtx  \|z\|\to\infty}\Pr\bigl
(\bigl|T(0,z)-\muZ(z)\bigr|>
\varepsilon\|z\| \bigr)=0\qquad\mbox{for every }\varepsilon>0.
\end{equation}
Also the convergence in~(\ref{eqweakST}) holds without restrictions
to the passage time distribution (see Section~\ref{secshell}).
However, under the assumption of a moment condition, it is possible to
obtain an estimate on the rate of decay in~(\ref{eqweakST}). This is
precisely the aim of this study, and the content of Theorems~\ref
{teoLDbelow} and~\ref{teoLDabove} below. Based thereon, we will
derive the following Hsu--Robbins--Erd\H{o}s type of strong law, which
characterizes the summability of tail probabilities of the above type.

\begin{teo}\label{teoHRE}
For every $\alpha>0$, $\varepsilon>0$ and $d\ge2$,
\[
\E\bigl[Y^\alpha\bigr]<\infty\quad\Longleftrightarrow\quad\sum
_{z\in\Z^d}\| z\|^{\alpha-d} \Pr\bigl(\bigl|T(0,z)-\muZ(z)\bigr|>
\varepsilon\|z\| \bigr)<\infty.
\]
\end{teo}

Apart from characterizing the summability of probabilities of large
deviations away from the time constant, Theorem~\ref{teoHRE} has
several implications for the shape theorem, which we will discuss next.
Cox and Durrett's version of the shape theorem is a strengthening
of~(\ref{eqweakST}), and can be stated as follows: if $\E
[Y^d]<\infty$, then
%
\begin{equation}
\label{eqshapelimit} \limsup_{z\in\Z^d\dvtx  \|z\|\to\infty}\frac{
|T(0,z)-\muZ
(z) |}{\|z\|}=0\qquad
\mbox{almost surely}.
\end{equation}
(A more popular way to phrase the shape theorem is offered below.) It
is well known that $\E[Y^d]<\infty$ is also necessary for the
convergence in~(\ref{eqshapelimit}). Let
\[
\Zdiv_\varepsilon:= \bigl\{z\in\Z^d\dvtx \bigl|T(0,z)-\muZ(z)\bigr|>
\varepsilon\|z\| \bigr\},
\]
and note that~(\ref{eqshapelimit}) is equivalent to saying that the
cardinality of the set $\Zdiv_\varepsilon$, denoted by $|\Zdiv
_\varepsilon|$, is finite for every $\varepsilon>0$ with
probability one. This statement is by Theorem~\ref{teoHRE}, under the
same assumption of $\E[Y^d]<\infty$, strengthened to say that $\E
|\Zdiv_\varepsilon|<\infty$ for every $\varepsilon>0$.

The shape theorem is commonly presented as a comparison between the
random set
\[
\Bc_t:= \bigl\{z\in\Z^d\dvtx T(0,z)\le t \bigr\}
\]
and the discrete ``ball'' $\Bc^{\muZ}_t:=\{z\in\Z^d\dvtx \muZ(z)\le t\}$.
The growth of the first-passage process may be divided into two regimes
characterized by the time constant: \mbox{$\muZ\equiv0$}, and $\muZ(z)>0$
for all $z\neq0$. In the more interesting regime $\muZ\not\equiv0$,
Cox and Durrett's shape theorem can be phrased: if $\E[Y^d]<\infty$,
then for every $\varepsilon>0$ the two inclusions
%
\begin{equation}
\label{eqshapeinclusion} \Bc^{\muZ}_{(1-\varepsilon)t} \subset\Bc_t
\subset\Bc^{\muZ}_{(1+\varepsilon)t}
\end{equation}
hold for all $t$ large enough, with probability one.
A simple inversion argument shows that this formulation is equivalent
to the one in~(\ref{eqshapelimit}).

The time constant $\muZ(\cdot)$ extends continuously to $\R^d$ and
inherits the properties of a semi-norm. In other words, one may
interpret~(\ref{eqshapeinclusion}) as $\frac{1}{t}\Bc_t$ being
asymptotic to the unit ball $\{x\in\R^d\dvtx \muZ(x)\le1\}$ expressed in
this norm, and failure of either inclusion in~(\ref
{eqshapeinclusion}) indicates a linear order deviation of $\Bc_t$
from this asymptotic shape. Inspired by Theorem~\ref{teoHRE}, one may
wonder whether the size, that is Lebesgue measure, of the set of times
for which~(\ref{eqshapeinclusion}) fails behaves similarly as the
size of $\Zdiv_\varepsilon$. Assume that $\muZ\not\equiv0$ and let
\[
\Tdiv_\varepsilon:= \bigl\{t\ge0\dvtx  \mbox{either inclusion in
(\ref
{eqshapeinclusion}) fails} \bigr\}.
\]
The shape theorem says that $\E[Y^d]<\infty$ is sufficient for the
supremum of $\Tdiv_\varepsilon$, and hence the Lebesgue measure
$|\Tdiv_\varepsilon|$ of the set $\Tdiv_\varepsilon$, to be
finite almost surely, for every $\varepsilon>0$. As it turns out, the
same condition is not sufficient to obtain finite expectation, as our
next result shows.

\begin{teo}\label{teoST}
Assume that $\muZ\not\equiv0$, and let $\alpha>0$, $\varepsilon
>0$ and $d\ge2$. Then
%
\[
\E\bigl[Y^{d+\alpha}\bigr]<\infty\quad\Longleftrightarrow\quad\E\bigl
[|\Tdiv
_\varepsilon|^\alpha\bigr]<\infty\quad\Longleftrightarrow\quad\E
\bigl[(\sup\Tdiv_\varepsilon)^\alpha\bigr]<\infty.
\]
\end{teo}

The title of the paper refers to Theorem~\ref{teoHRE}, and is
motivated by the following comparison with its one-dimensional
analogue. Let $S_n$ denote the sum of $n$ i.i.d. random variables with
mean $m$. The strong law of large numbers states that $S_n/n$ converges
almost surely to its mean as $n$ tends to infinity. This is the
one-dimensional analogue of the shape theorem. Equivalently put, the
number of $n$ for which $|S_n-nm|>\varepsilon n$ is almost surely
finite for every $\varepsilon>0$. This is true if and only if the
mean $m$ is finite. Hsu and Robbins~\cite{hsurob47} proved that if
also second moments are finite, then
\[
\sum_{n=1}^\infty
\Pr\bigl(|S_n-nm|>\varepsilon n\bigr) < \infty.
\]
Erd\H{o}s \cite{erdos49,erdos50} showed that finite second moment is
in fact also necessary for this stronger conclusion to hold. In
particular, the stronger conclusion requires a stronger hypothesis. The
analogous strengthening of the shape theorem (Theorem~\ref{teoHRE})
holds without the need of a stronger hypothesis.

\begin{remark*}
A sequence $X_1,X_2,\ldots$ of random variables which for all
$\varepsilon>0$ and some random variable $X$ satisfies $\sum
_{n=1}^\infty\Pr(|X_n-X|>\varepsilon)<\infty$ is necessarily
convergent to $X$ almost surely. As introduced by Hsu and Robbins, this
stronger mode of convergence was, ``for want of a better name,'' by them
called \emph{complete}. In their language, Theorem~\ref{teoHRE}
implies that $\E[Y^d]<\infty$ is necessary and sufficient not only
for the almost sure, but also for complete convergence in the shape theorem.
\end{remark*}

\begin{remark*}
In the\vspace*{2pt} statement of the shape theorem,~(\ref{eqshapeinclusion}) is
often exchanged for $(1-\varepsilon)\widetilde\Bc^{\muZ}\subset\frac
{1}{t}\widetilde\Bc_t\subset(1+\varepsilon)\widetilde\Bc^{\muZ}$, where
$\widetilde\Bc_t$ is the ``fattened'' set obtained by replacing each site
in $\Bc_t$ by a unit cube centered around it, and where $\widetilde\Bc
^{\muZ}=\{x\in\R^d\dvtx \muZ(x)\le1\}$. This formulation is equivalent
to the one based on~(\ref{eqshapeinclusion}). In Section~\ref
{sectimediv}, where Theorem~\ref{teoST} is proved, it will be clear
why it is more convenient to work with the discrete sets in~(\ref
{eqshapeinclusion}).
\end{remark*}

\subsection{Large deviation estimates}

Grimmett and Kesten were concerned with large deviation estimates and
large deviation principles for travel times in coordinate directions,
such as the family $\{T(0,n\ebf_1)-n\muZ(\ebf_1)\}_{n\ge1}$. Their
results in \cite{grikes84} were soon improved upon in \cite
{kesten86}. Deviations above and below the time constants behave quite
differently. The first observation in this direction is that the
probability of large deviations below the time constant decay at an
exponential rate without restrictions to the passage time distribution.
This was proved for travel times in coordinate direction in \cite{grikes84}, $d=2$, and \cite{kesten86}, $d\ge2$. A further indication
is that the probability of deviations above the time constant decays
superexponentially, subject to a sufficiently strong moment condition
on $Y$ (at least exponential). This observation was first made by
Kesten~\cite{kesten86}, Theorem~5.9, but more recently refined by Chow
and Zhang~\cite{chozha03} and Cranston, Gauthier and Mountford~\cite{cragaumou09}.

The main contribution of this study is an estimate on linear order
deviations above the time constant under the assumption that $\E
[Y^\alpha]<\infty$ for some $\alpha>0$, presented in Theorem~\ref
{teoLDabove} below. The proof of this result will make crucial use of
the fact that what determines the rate of decay of the probability tail
of the travel time $T(0,z)$ is the distribution of the weights of the
$2d$ edges reaching out of the origin and the $2d$ edges leading in to
the point $z$. This idea is in itself not new. A similar idea was used
by Cox and Durrett to prove existence of the limit in~(\ref
{deftimeconstant}) without any moment condition (see~\cite{coxdur81}
for $d=2$, and~\cite{kesten86} for higher dimensions). Also Zhang
\cite{zhang10} builds on their work and obtains concentration
inequalities based on a moment condition for the travel time between
sets whose size is growing logarithmically in their distance.

We provide two estimates for deviations of linear order, considering
deviations below and above the time constant separately. The first
extends the result of Grimmett and Kesten~\cite{grikes84,kesten86}
from the coordinate axis to all of $\Z^d$.

\begin{teo}\label{teoLDbelow}
For every $\varepsilon>0$, there are $M=M(\varepsilon)$ and $\gamma
=\gamma(\varepsilon)>0$ such that for every $z\in\Z^d$ and $x\ge\|
z\|$
\[
\Pr\bigl(T(0,z)-\muZ(z)<-\varepsilon x \bigr) \le M e^{-\gamma x}.
\]
\end{teo}

A similar exponential rate of decay cannot hold in general for
deviations above the time constant, since $\Pr(T(0,z)-\muZ
(z)>\varepsilon x)$ is bounded from below by $\Pr(Y>Mx)$ for any
sufficiently large $M$. One may instead wonder whether the decay of
$\Pr(T(0,z)-\muZ(z)>\varepsilon x)$ in fact is determined by the
probability tails of~$Y$. Interestingly, in the regime of polynomial
decay on the probability tails, this is indeed the case.

\begin{teo}\label{teoLDabove}
Assume that $\E[Y^\alpha]<\infty$ for some $\alpha>0$. For every
$\varepsilon>0$ and $q\ge1$ there exists $M=M(\alpha,\varepsilon,q)$
such that for every $z\in\Z^d$ and $x\ge\|z\|$
\[
\Pr\bigl(T(0,z)-\muZ(z)>\varepsilon x \bigr) \le M \Pr(Y>x/M)+
\frac{M}{x^q}.
\]
\end{teo}

The strength in Theorem~\ref{teoLDbelow} is that it gives exponential
decay without the need of a moment condition, which is best possible;
see~\cite{kesten86}, Theorem~5.2. Moreover, the upper bound is
independent of the direction. This fact is a consequence of the
equivalence between $\muZ$ and the usual $\ell^p$-distances.

The strength in Theorem~\ref{teoLDabove} is that under a minimal
moment assumption, it relates the probability tail of $T(0,z)-\muZ(z)$
directly with that of $Y$, together with an additional error term. In
the case of polynomial decay of the tails of $Y$, this result is
essentially sharp. It is not clear whether the polynomially decaying
error term that appears in Theorem~\ref{teoLDabove} could be improved
or not. However, in view of the exponential decay obtained in the case
of a moment condition of exponential order, and the superexponential
decay for bounded passage times (see \cite{kesten86}, Theorem~5.9, or
\cite{chozha03} and \cite{cragaumou09}), it seems possible that the
error in fact may decay at least exponentially fast. This is the most
interesting question left open in this study, together with the
question whether it is possible to remove the moment condition in
Theorem~\ref{teoLDabove} completely.

Theorem~\ref{teoHRE} is easily derived from Theorem~\ref
{teoLDbelow} and Theorem~\ref{teoLDabove}. The proof of Theorem~\ref
{teoLDbelow} will follow the steps of \cite{kesten86}, whereas the
proof of Theorem~\ref{teoLDabove} will be derived from first
principles, via a regeneration argument similar to that used in~\cite
{A-1}. A similar characterization of deviations away from the time
constant as the one presented here has in parallel been derived for
first-passage percolation of cone-like subgraphs of the $\Z^d$ lattice
by the same author in~\cite{A-3}, however, detailed proofs appear only here.
Complementary to Theorem~\ref{teoHRE}, we may also obtain necessary
and sufficient conditions for summability of tails in radial directions
from Theorems~\ref{teoLDbelow} and~\ref{teoLDabove}.

\begin{cor}\label{corLDsum}
For any $\alpha>0$, $\varepsilon>0$ and $z\in\Z^d$,
\[
\E\bigl[Y^\alpha\bigr]<\infty\quad\Longleftrightarrow\quad\sum
_{n=1}^\infty n^{\alpha-1} \Pr\bigl(\bigl|T(0,nz)-n
\muZ(z)\bigr|>\varepsilon n \bigr)<\infty.
\]
\end{cor}

Another consequence of Theorems~\ref{teoLDbelow} and~\ref
{teoLDabove} is the following characterization of $L^p$-convergence,
of which a proof may be found in~\cite{A-3}.

\begin{cor}\label{corLp}
For every $p>0$,
\[
\E\bigl[Y^p\bigr]<\infty\quad\Longleftrightarrow\quad\limsup
_{z\in\Z^d\dvtx  \|z\|
\to\infty}\E\biggl\llvert\frac{T(0,z)-\muZ(z)}{\|z\|}\biggr\rrvert
^p=0.
\]
\end{cor}

Constants given above and also later on in this paper generally depend
on the dimension $d$ and on the actual passage time distribution.
However, this will not always be stressed in the notation. We would
also like to remind the reader that above and for the rest of this
paper we will let $|\cdot|$ denote Euclidean distance, and let $\|
\cdot\|$ denote $\ell^1$-distance. Although the former notation will
also be used to denote cardinality for discrete sets, and Lebesgue
measure for (measurable) subsets of $\R^d$, we believe that what is
referred to will always be clear form the context. Finally, we will
denote the $d$ coordinate directions by $\ebf_i$ for $i=1,2,\ldots,d$,
and recall that $Y$ denotes the minimum of $2d$ independent random
variables distributed as $\tau_e$.

We continue this paper with a discussion of some preliminary results
and observations in Section~\ref{secprel}. In Section~\ref
{secLDbelow}, we prove Theorem~\ref{teoLDbelow}, and in Section~\ref
{secreg} we describe a regenerative approach that in Section~\ref
{secLDabove} will be used to prove Theorem~\ref{teoLDabove}.
Finally, Theorem~\ref{teoHRE} is derived in Section~\ref
{secHREproof}, and Theorem~\ref{teoST} in the ending Section~\ref
{sectimediv}.

\section{Convergence toward the asymptotic shape}\label{secprel}

Before moving on to the core of this paper, we will first discuss some
preliminary observations and results for later reference. We will begin
with a few properties of the time constant, and their consequences for
the asymptotic shape $\{x\in\R^d\dvtx \muZ(x)\le1\}$.
We thereafter describe Cox, Durrett and Kesten's approach to
convergence without moment condition, in order to state Kesten's
version of the shape theorem. Kesten's theorem will be required in
order to prove Theorem~\ref{teoLDbelow} without moment condition; a
first application is found in Proposition~\ref{propTshape} below.

\subsection{The time constant and asymptotic shape}

The foremost characteristic of first-passage percolation is its
subadditive property, inherited from its interpretation as a
pseudo-metric on $\Z^d$. This property takes the expression
\[
T(x,z) \le T(x,y)+T(y,z)\qquad\mbox{for all }x,y,z\in\Z^d,
\]
and will be used repeatedly throughout this study. Subadditivity also
carries over in the limit. The time constant $\muZ$ was defined
in~(\ref{deftimeconstant}) on $\Z^d$, but extends in fact
continuously to all of $\R^d$.
The extension is unique with respect to preservation of the following
properties:
\begin{eqnarray*}
\muZ(ax)&=& |a|\muZ(x)\qquad\mbox{for }a\in\R\mbox{ and }x\in\R^d,
\\
\muZ(x+y)&\le& \muZ(x)+\muZ(y)\qquad\mbox{for }x,y\in\R^d,
\\
\bigl|\muZ(x)-\muZ(y)\bigr| &\le&\muZ(\ebf_1)\|x-y\|\qquad\mbox{for }x,y\in
\R^d.
\end{eqnarray*}
The third of the above properties is easily obtained from the previous
two, and shows that $\muZ\dvtx \R^d\to[0,\infty)$ is Lipschitz continuous.

As mentioned above, there are two regimes separating the behavior of
$\muZ$. Either $\muZ\equiv0$, or $\muZ(x)\neq0$ for all $x\neq0$.
The separating factor is, as mentioned, whether $\Pr(\tau_e=0)\ge
p_c(d)$ or not, where $p_c(d)$ denotes the critical probability for
bond percolation on $\Z^d$. In the latter regime $\muZ$ satisfies all
the properties of a norm on~$\R^d$, and the unit ball $\{x\in\R
^d\dvtx \muZ(x)\le1\}$ in this norm can be shown to be compact convex and
to have nonempty interior. Consequently, $\mu$ is bounded away from 0
and infinity on any compact set not including the origin. In particular,
\[
0 < \inf_{\|x\|=1}\muZ(x) \le\sup_{\|x\|=1}\muZ(x)
< \infty.
\]
A careful account for the above statements is found in~\cite
{kesten86}. (See also Appendix~\ref{appmu} below.)

\subsection{A shape theorem without moment condition}\label{secshell}

Cox and Durrett~\cite{coxdur81} found a way to prove existence of the
limit in~(\ref{deftimeconstant}) without restrictions to the passage
time distribution. Their argument was presented for $d=2$, and later
extended to higher dimensions by Kesten~\cite{kesten86}. As a
consequence, Kesten showed that the moment condition in the shape
theorem can be removed to the cost of a weakening of its conclusion.
Since Kesten's result will be important in order to derive an estimate
on large deviations below the time constant (Theorem~\ref
{teoLDbelow}), we will recall the result here. To reproduce the result
in a fair amount of detail requires that some notation is introduced.
However, a bit loosely put, Kesten's result states that if $\Bc_t$ is
replaced by the set $\overline{\Bc}_t$ containing $\Bc_t$ and each other
point in $\Z^d$ ``surrounded'' by~$\Bc_t$, then~(\ref
{eqshapeinclusion}) holds for all large enough $t$ almost surely also
without the moment condition. That is, $\overline{\Bc}_t$ should be
thought of as containing
all points from which there is no infinite self-avoiding path disjoint
with $\Bc_t$.

Given $\delta>0$, pick $\tlarge=\tlarge(\delta)$ such that $\Pr
(\tau_e\le\tlarge)\ge1-\delta$. Next, color each vertex in $\Z^d$
either black or white depending on whether at least one of the edges
adjacent to it has weight larger than $\tlarge$ or not. The moral here
is that if $\delta$ is small, then an infinite connected component of
white vertices will exist with probability one, and that travels within
this white component are never ``slow.'' Based on this idea, we go on and
define ``shells'' of white vertices around each point in $\Z^d$. If we
make sure that these shells exist, are not too large, but intersect the
infinite white component, then the shells may be used to ``surround''
points in $\Z^d$.

Without\vspace*{1pt} reproducing all the details, Kesten shows that it is possible
to define a set $\Delta_z\subset\Z^d$ consisting of white vertices
and which, given that $\delta=\delta(d)>0$ is sufficiently small,
almost surely satisfies the following properties (see Appendix~\ref
{appshell} below):
\begin{longlist}[(3)]
\item[(1)] $\Delta_z$ is a finite connected subset of $\Z^d$,
\item[(2)] every path connecting $z$ to infinity has to intersect $\Delta_z$,
\item[(3)] there is a point in $\Delta_z$ which is connected to infinity by
a path of white vertices,
\item[(4)] either every path between $y$ and $z$ in $\Z^d$ intersects both
$\Delta_y$ and $\Delta_z$, or $\Delta_y\cap\Delta_z\neq\varnothing$.
\end{longlist}
Moreover, the shells may be chosen so that their diameter, defined as
the maximal $\ell^1$-distance between a pair of its elements, for some
$M<\infty$ and $\gamma>0$ satisfies
%
\begin{equation}
\label{eqshelldecay} \Pr\bigl(\diam(\Delta_z)>n\bigr) \le
Me^{-\gamma n}\qquad\mbox{for all }n\ge1.
\end{equation}

The advantage of the construction of shells is that although travel
times between points may be too heavy-tailed to obey a strong law, the
travel time between shells of two points in $\Z^d$ have finite moments
of all orders. That $T(\Delta_y,\Delta_z)$ is a lower bound for
$T(y,z)$ is a consequence of the fourth property. A complementary upper
bound is obtained by summing the weights of paths connecting $y$ and
$\Delta_y$, $\Delta_y$ and~$\Delta_z$, and $\Delta_z$ and $z$,
respectively. These paths may not intersect and form a path between $y$
and $z$, so in order to obtain an upper bound, we also have to consider
the maximal weight of a path between two points in $\Delta_y$ and
$\Delta_z$, respectively. Since each shell is white and connected, and
the $2d$ edges adjacent to a white vertex have weight at most $\tlarge
$, we arrive at the following inequality:
%
\begin{equation}
\label{eqshellcompare} 0\le T(y,z)-T(\Delta_y,\Delta_z)\le T(y,
\Delta_y)+T(\Delta_z,z)+2\,d\tlarge\bigl(|
\Delta_y|+|\Delta_z|\bigr).
\end{equation}
Without the need of a moment condition $(T(\Delta_0,\Delta
_{nz})+2\,d\tlarge|\Delta_{nz}|)_{n\ge1}$ is found to satisfy the
conditions of the subadditive ergodic theorem; see~\cite{kesten86}, Theorem~2.26. Consequently, the limit of $\frac
{1}{n}T(\Delta_0,\Delta_{nz})$ as $n\to\infty$ exists almost surely
and in $L^1$, and together with~(\ref{eqshellcompare}), existence of
the limit in~(\ref{deftimeconstant}) is obtained.

Let us now move on to state Kesten's version of the shape theorem. For
our purposes, it will be practical to present the statement on the form
of a limit, in analogy to~(\ref{eqshapelimit}). On this form,
Kesten's theorem \cite{kesten86}, Theorem~3.1, simply states that
%
\begin{equation}
\label{eqSTkesten} \limsup_{z\in\Z^d\dvtx  \|z\|\to\infty}\frac{|T(0,\Delta
_z)-\muZ
(z)|}{\|z\|}=0\qquad
\mbox{almost surely}.
\end{equation}
The weak version of the shape theorem stated in~(\ref{eqweakST}) is
now easily obtained from~(\ref{eqSTkesten}) together with~(\ref
{eqshellcompare}). As a comparison, recall that Cox and Durrett's
version of the shape theorem states that if $\E[Y^d]<\infty$,
then~(\ref{eqSTkesten}) holds also if $\Delta_z$ is replaced by $z$.

\subsection{Point-to-shape travel times}

In view of the convergence of the set $\Bc_t$ toward a convex compact
set described in terms of $\muZ(\cdot)$, it is reasonable to study
the travel time to points at a large distance with respect to this
norm. That is, introduce what could be referred to as \emph
{point-to-shape} travel times as $T(0,\neg\Bc^{\muZ}_n)$, where
$\neg\Bc^{\muZ}_n:=\Z^d\setminus\Bc^{\muZ}_n=\{z\in\Z^d\dvtx \muZ
(z)>n\}$. This definition only makes sense in the case that $\muZ\not
\equiv0$, and in this case a strong law for the point-to-shape travel
times holds without restriction on the passage time distribution.

\begin{prop}\label{propTshape}
Assume that $\muZ\not\equiv0$. Then
\[
\lim_{n\to\infty}\frac{T(0,\neg\Bc^{\muZ}_n)}{n}=1\qquad\mbox{almost surely}.
\]
\end{prop}

\begin{pf}
Let $m_n$ denote the least integer for which $m_n\muZ(\ebf_1)>n$. By
definition, we have
\[
\frac{T(0,\neg\Bc^{\muZ}_n)}{n} \le\frac{T(0,\Delta_{m_n\ebf
_1})+\tlarge|\Delta_{m_n\ebf_1}|}{n} \to1\qquad\mbox{almost surely}.
\]
So, it is sufficient to show that the event
\[
A_\delta= \biggl\{\liminf_{n\to\infty}\frac{T(0,\neg\Bc^{\muZ
}_n)}{n}
\le1-\delta\biggr\}
\]
has probability 0 to occur for every $\delta>0$. On the event
$A_\delta$ there is an increasing sequence $(n_k)_{k\ge1}$ of
integers for which $T(0,\neg\Bc^{\muZ}_{n_k})\le(1-\delta/2)n_k$.
For each such~$n_k$, there is a site $v_k$ such that $\muZ(v_k)>n_k$,
but $T(0,v_k)\le(1-\delta/2)n_k$. When $n_k$ is large we may further
assume that $\muZ(v_k)\le2n_k$. Consequently, we conclude that for
large $k$
%
\begin{equation}
\label{eqsubdiv} T(0,v_k)-\muZ(v_k) \le-\delta
n_k/2 \le-\delta\muZ(v_k)/4 \le-\varepsilon
\|v_k\|,
\end{equation}
for some $\varepsilon>0$. However, $T(0,\Delta_v)\le T(0,v)$, so the
occurrence of~(\ref{eqsubdiv}) for infinitely many $k$ is
contradicted by~(\ref{eqSTkesten}), almost surely. That is, $\Pr
(A_\delta)=0$ for every $\delta>0$, as required.
\end{pf}

\section{Large deviations below the time constant}\label{secLDbelow}

We will follow the approach of Kesten~\cite{kesten86}, Theorem~5.2, on
our way to a proof of Theorem~\ref{teoLDbelow}. If $\mu\equiv0$,
then there is nothing to prove. So, we may assume that $\mu\not\equiv
0$. Unlike Kesten, we will work with the point-to-shape travel times
introduced above in order to obtain a bound on deviations in all
directions simultaneously, and not only for coordinate directions. The
first and foremost step is this next lemma.

\begin{lma}\label{lmaTW1}
Let $X_{\ell,\ell+m}^{(q)}$ for $q=1,2,\ldots$ denote independent
random variables distributed as $T(\Bc^{\muZ}_\ell,\neg\Bc^{\muZ
}_{\ell+m})$. There exists $C<\infty$ such that for every $n\ge m\ge
\ell\ge1$ and $x>0$ we have
\[
\Pr\bigl(T\bigl(0,\neg\Bc^{\muZ}_n\bigr)<x \bigr) \le\sum
_{Q+1\ge
n/(m+C\ell)}n^{d-1} \biggl(C\frac{m}{\ell}
\biggr)^{ d(Q-1)} \Pr\Biggl(\sum_{q=1}^QX_{\ell,\ell+m}^{(q)}<x
\Biggr).
\]
\end{lma}

\begin{pf}
Pick $z\in\Z^d$ such that $\muZ(z)>n$. Let $\Gamma=\Gamma(z)$ be a
self-avoiding path connecting the origin to $z$. Choose a subsequence
$v_0,v_1,\ldots,v_Q$ of the vertices in $\Gamma$ as follows. Set
$v_0=0$. Given $v_q$, choose $v_{q+1}$ to be the first vertex in
$\Gamma$ succeeding $v_q$ such that
\[
\muZ(v_{q+1}-v_q)>m+2\ell.
\]
When no such vertex exists, stop and set $Q=q$. To find a lower bound
on $Q$, note that
\[
n < \muZ(z) \le\muZ(z-v_Q)+\muZ(v_Q) \le
\muZ(z-v_Q)+\sum_{q=0}^{Q-1}
\muZ(v_{q+1}-v_q).
\]
Since $\muZ(v_{q+1}-v_q)\le m+2\ell+\muZ(\ebf_1)$ and $\muZ
(z-v_Q)\le m+2\ell$, we see that $Q$ must satisfy
%
\begin{equation}
\label{eqQbound} n \le(Q+1) \bigl(m+ \bigl(2+\muZ(\ebf_1) \bigr)\ell
\bigr).
\end{equation}

Next, pick $r>0$ such that $[-r,r]^d\subseteq\Bc^{\muZ}_1$ and tile
$\Z^d$ with copies of $(-r\ell,r\ell]^d$ such that each box is
centered at a point in $\Z^d$, and each point in $\Z^d$ is contained
in precisely one box. Let $\Lambda_q$ denote the box that contains
$v_q$, and let $w_q$ denote the center of $\Lambda_q$. Of course, the
tiling can be assumed chosen such that $w_0=v_0=0$. Denote by $\Gamma
_q$ the part of the path $\Gamma$ that connects $v_q$ and $v_{q+1}$.
Note that for $q_1\neq q_2$ the two pieces $\Gamma_{q_1}$ and $\Gamma
_{q_2}$ are edge disjoint. By construction, $v_q$ is contained in the
copy of $\Bc^{\muZ}_\ell$ centered at $w_q$, while $v_{q+1}$ is not
contained in the copy of $\Bc^{\muZ}_{\ell+m}$ centered at $w_q$
(see Figure~\ref{figlower}). That is,
%
\begin{equation}
\label{eqpartitioncond} \muZ(v_q-w_q) \le\ell\quad\mbox{and}\quad
\muZ(v_{q+1}-w_q) > \ell+m.
\end{equation}
Moreover, the points $w_0,w_1,\ldots,w_{Q-1}$ have to satisfy
%
\begin{equation}
\label{eqpartitioncond2} \muZ(w_{q+1}-w_q) \le
m+4\ell+
\muZ(\ebf_1).
\end{equation}
Let $W_Q$ denote the set of all sequences $(w_0,w_1,\ldots,w_{Q-1})$
such that $w_0=0$, each $w_q$ is the center of some box $\Lambda_q$,
and $w_q$ and $w_{q+1}$ satisfies~(\ref{eqpartitioncond2}) for each
$q=0,1,\ldots,Q-2$.
%
\begin{figure}[t]

\includegraphics{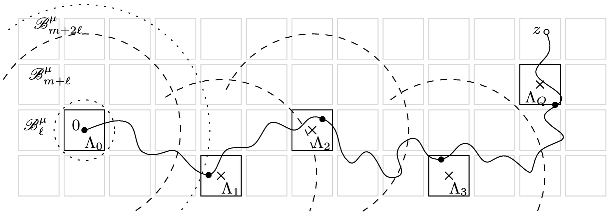}

\caption{The decomposition of a path into segments, where dots
represent the $v_k$'s.}
\label{figlower}
\end{figure}

Given $x>0$, $Q\in\Z_+$ and $w=(w_0,w_1,\ldots,w_{Q-1})\in W_Q$, let
$A(x,w)$ denote the event that there exists a path $\Gamma$ from the
origin to $z$ with edge disjoint segments $\Gamma_0,\Gamma_1,\ldots,\Gamma_{Q-1}$ such that:
\begin{longlist}[(2)]
\item[(1)] $\sum_{q=0}^{Q-1}T(\Gamma_q)<x$,
\item[(2)] the endpoints $v_q$ and $v_{q+1}$ of $\Gamma_q$ satisfy~(\ref
{eqpartitioncond}), for each $q=0,1,\ldots,Q-1$.
\end{longlist}
Since $T(\Gamma)\ge\sum_{q=0}^{Q-1}T(\Gamma_q)$, together
with~(\ref{eqQbound}), we obtain that
%
\begin{equation}
\label{eqpossiblepaths} \bigl\{T(0,z)<x \bigr\} \subseteq\bigcup
_{Q+1\ge n/(m+b\ell)} \bigcup_{w\in W_Q} A(x,w),
\end{equation}
where $b=2+\muZ(\ebf_1)$. Note that given $w_q$, the passage time of
any path between two vertices $v$ and $v'$ such that $\muZ(v-w_q)\le
\ell$ and $\muZ(v'-w_q)>\ell+m$ is stochastically larger than $T(\Bc
^{\muZ}_\ell,\neg\Bc^{\muZ}_{\ell+m})$. Hence, via a BK-like
inequality (e.g., Theorem~4.8, or~(4.13), in \cite{kesten86}), it is
for each $w\in W_Q$ possible to bound the probability of the event
$A(x,w)$ from above by
%
\begin{equation}
\label{eqdPX} \Pr\bigl(X_{\ell,\ell+m}^{(1)}+X_{\ell,\ell+m}^{(2)}+
\cdots+X_{\ell,\ell+m}^{(Q)}<x \bigr).
\end{equation}

It remains to count the number of elements $(w_0,w_1,\ldots,w_{Q-1})$
in $W_Q$. Assuming that $w_q$ has already been chosen, the number of
choices for $w_{q+1}$ is restricted by~(\ref{eqpartitioncond2}). In
particular, $w_{q+1}$ has to be contained in a cube centered at $w_q$
and whose side length is a multiple of $(5+\muZ(\ebf_1))m$. This cube
is intersected by at most $(Cm/\ell)^d$ boxes of the form $(-r\ell,r\ell
]^d$ in the tiling of $\Z^d$, for some $C<\infty$. Since
$w_{q+1}$ is the center of one of these boxes, this is also an upper
bound for its number of choices. Consequently,\vspace*{2pt} the total number of
choices for $w_1,w_2,\ldots,w_{Q-1}$ is at most $(Cm/\ell)^{d(Q-1)}$.
Together with~(\ref{eqpossiblepaths}) and~(\ref{eqdPX}), we
conclude that
\[
\Pr\bigl(T(0,z)<x \bigr) \le\sum_{Q+1\ge n/(m+C\ell)} \biggl(C
\frac{m}{\ell} \biggr)^{d(Q-1)}\Pr\Biggl(\sum
_{q=1}^QX_{\ell,\ell+m}^{(q)}<x \Biggr),
\]
for some $C<\infty$. The lemma follows observing that the number of
$z\in\Z^d$ that satisfies $\muZ(z)>n$ and has a neighbor within $\Bc
^{\muZ}_n$ is of order $n^{d-1}$.
\end{pf}

\begin{lma}\label{lmaTW2}
Assume that $\muZ\not\equiv0$. For every $\varepsilon>0$,
\[
\lim_{m\to\infty}\max_{\ell\le m}\Pr\bigl(T\bigl(
\Bc^{\muZ}_\ell,\neg\Bc^{\muZ}_{\ell+m}
\bigr)<m(1-\varepsilon) \bigr)=0.
\]
\end{lma}

\begin{pf}
Let $\Gamma$ be a path with endpoints $z$ and $y$ satisfying $\muZ
(z)\le\ell$ and $\muZ(y)>\ell+m$. Then
\[
T\bigl(0,\neg\Bc^{\muZ}_{\ell+m}\bigr) \le T(0,
\Delta_z)+\tbar|\Delta_z|+T(\Gamma),
\]
and we may choose $\Gamma$ so that $T(\Gamma)=T(\Bc^{\muZ}_\ell,\neg\Bc
^{\muZ}_{\ell+m})$. It follows that
\begin{eqnarray*}
T\bigl(0,\neg\Bc^{\muZ}_{\ell+m}
\bigr) &\le&\max\bigl\{T(0,\Delta_z)\dvtx \muZ(z)\le\ell\bigr\}
\\
&&{} +\tbar\cdot\max\bigl\{|\Delta_z|\dvtx \muZ(z)\le\ell\bigr\}+T\bigl(
\Bc^{\muZ
}_\ell,\neg\Bc^{\muZ}_{\ell+m}\bigr).
\end{eqnarray*}
For every $\varepsilon>0$, we have $\Pr(T(0,\neg\Bc^{\muZ
}_{\ell+m})<(\ell+m)(1-\varepsilon/4) )<\varepsilon$ for all
large enough $m$, by Proposition~\ref{propTshape}. Thus, it suffices
to show that for all large $m$ and $\ell\le m$ also
\[
\Pr\biggl(\max\bigl\{T(0,\Delta_z)\dvtx \muZ(z)\le\ell\bigr
\}>\ell+\frac
{\varepsilon m}{4} \biggr)+\Pr\biggl(\max\bigl\{|\Delta_z|\dvtx
\mu(z)\le m\bigr\} >\frac{\varepsilon m}{4\tbar} \biggr)
\]
is at most $\varepsilon$. The latter of the two probabilities
vanishes as $m\to\infty$ as a consequence of the exponential decay of
the diameter of a shell in~(\ref{eqshelldecay}). To show that the
former probability is small, we will use~(\ref{eqSTkesten}).

Let $N$ be an integer and note that if $\max\{T(0,\Delta_z)\dvtx \muZ
(z)\le\ell\}>\ell+\varepsilon m/4$, then either $\max\{T(0,\Delta
_z)\dvtx \muZ(z)\le N\}>\varepsilon m/4$ or $T(0,\Delta_z)-\muZ(z)>\ell
-\muZ(z)+\varepsilon m/4$ for some $z$ satisfying $\muZ(z)\in
[N,\ell]$. Since $m\ge\ell\ge\muZ(z)$, we have $\ell-\muZ
(z)+\varepsilon m/4\ge\varepsilon\muZ(z)/4$. We thus obtain the inequality
%
\begin{eqnarray*}
&&\Pr\biggl(\max\bigl\{T(0,\Delta_z)\dvtx
\muZ(z)\le\ell\bigr\}>\ell+\frac
{\varepsilon m}{4} \biggr)
\\
&&\qquad\le\Pr\biggl(\max\bigl\{T(0,\Delta_z)\dvtx \muZ(z)\le N\bigr\}>
\frac
{\varepsilon m}{4} \biggr)
\\
&&\quad\qquad{} +\Pr\bigl(T(0,\Delta_z)> (1+\varepsilon/4 )\muZ(z)\mbox{ for
some }\muZ(z)\ge N \bigr).
\end{eqnarray*}
From~(\ref{eqSTkesten}), we know that the right-hand side can be made
arbitrarily small by choosing $N$ large and sending $m$ to infinity.
\end{pf}

\begin{pf*}{Proof of Theorem~\ref{teoLDbelow}}
We may assume that $\muZ\not\equiv0$. We will prove that for every
$\varepsilon>0$ there exist $M=M(\varepsilon)$ and $\gamma=\gamma
(\varepsilon)$ such that for every $x\ge n\ge1$
\[
\Pr\bigl(T\bigl(0,\neg\Bc^{\muZ}_n\bigr)<n-\varepsilon x
\bigr) \le M e^{-\gamma x},
\]
from which Theorem~\ref{teoLDbelow} is an easy consequence.\vspace*{1pt}

Let\vspace*{1pt} $X_{\ell,\ell+m}^{(1)},X_{\ell,\ell+m}^{(2)},\ldots,X_{\ell,\ell
+m}^{(Q)}$ and $C<\infty$ be as in Lemma~\ref{lmaTW1}, and fix
$\varepsilon\in(0,4C)$. For some integer $m$, let $\ell=\ell(m)$
be the largest integer such that $\ell\le\frac{m\varepsilon}{4C}$.
Markov's inequality and independence give that for any $\xi>0$
\[
\Pr\Biggl(\sum_{q=1}^QX_{\ell,\ell+m}^{(q)}<n-
\varepsilon x \Biggr) \le e^{\xi(n-\varepsilon x)}\E\bigl[e^{-\xi
X_{\ell,\ell
+m}^{(1)}}
\bigr]^Q.
\]
Writing $n-\varepsilon x=n(1-\varepsilon)-\varepsilon(x-n)$, we
obtain for $(Q+1)(m+C\ell)\ge n$ the upper bound
\[
e^{-\varepsilon\xi(x-n)}e^{\xi(m+C\ell)} \bigl[e^{\xi(m+C\ell
)(1-\varepsilon)} \bigl(e^{-\xi m(1-\varepsilon/2)}+
\Pr\bigl(X_{\ell,\ell+m}^{(1)} <m(1-\varepsilon/2) \bigr) \bigr)
\bigr]^Q.
\]
Since $C\ell-m\varepsilon/2\le-m\varepsilon/4$ and $m+C\ell\le
(1+\varepsilon/4)m$, the expression within square brackets is at most
%
\begin{equation}
\label{eqsqbrackprob} e^{-\xi m\varepsilon/4}+e^{(1+\varepsilon
/4)\xi m} \Pr\bigl(X_{\ell,\ell+m}^{(1)}<m(1-
\varepsilon/2) \bigr).
\end{equation}
According to Lemma~\ref{lmaTW2}, we can make~(\ref{eqsqbrackprob})
arbitrarily small by choosing $\xi$ and $m$ such that $\xi m$ is large
and $m$ is as large as necessary. Fix $\xi$ and $m$ such that $\ell
\ge1$ and~(\ref{eqsqbrackprob}) is not larger than
\[
(2C)^{-d} \biggl(\frac{8C}{\varepsilon} \biggr)^{-d} \le
\biggl(2C\frac{m}{\ell} \biggr)^{-d}.
\]
Finally, apply Lemma~\ref{lmaTW1} with these $\xi$, $m$ and $\ell$
to obtain
\begin{eqnarray*}
&&\Pr\bigl(T\bigl(0,\neg\Bc^{\muZ}_n
\bigr)<n-\varepsilon x \bigr)
\\
&&\qquad\le e^{-\varepsilon\xi(x-n)}e^{\xi(m+C\ell)}\sum_{(Q+1)\ge
{n}/{(m+C\ell)}}n^{d-1}
\biggl(C\frac{m}{\ell} \biggr)^{d(Q-1)} \biggl(2C\frac{m}{\ell}
\biggr)^{-dQ}
\\
&&\qquad\le e^{-\varepsilon\xi(x-n)}\cdot e^{\xi(m+C\ell)}\cdot n^{d-1}
\cdot2^{-d(n/(m+C\ell)-1)+1},
\end{eqnarray*}
which is of the required form.
\end{pf*}

\section{A regenerative approach}\label{secreg}

We will in this section explore a regenerative approach that can be
used to study the asymptotics of travel times along cylinders. This
approach was previously studied in more detail in \cite{A-1}. It will
for the sake of this paper be sufficient to obtain a sequence which is
approximately regenerative, which in turn avoids some additional
technicalities. Some additional notation will be required however.

Given\vspace*{2pt} $z\in\Z^d$ and $r\ge0$, let $\Cyl(z,r):=\bigcup_{a\in\R
}B(az,r)$ denote the cylinder in direction $z$ of radius $r$, where
$B(x,r):=\{y\in\R^d\dvtx |y-x|\le r\}$ denotes the closed Euclidean ball.
The travel time between two points $x$ and $y$ over paths restricted to
the cylinder $\Cyl(z,r)$ will be denoted by $T_{\Cyl(z,r)}(x,y)$. The
regenerative approach referred to will consist of a comparison between
$T_{\Cyl(z,r)}(0,nz)$ and the sum of travel times between randomly
chosen ``cross-sections'' of $\Cyl(z,r)$.

%
\begin{figure}[b]

\includegraphics{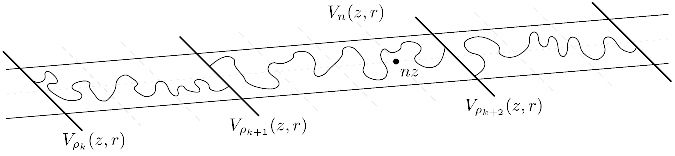}

\caption{A piece of $\Cyl(z,r)$. The thick diagonal lines indicate $\{
V_{\rho_j}(z,r)\}_{j\ge0}$ and the curly lines $\{T_{\Cyl
(z,r)}(V_{\rho_{j-1}},V_{\rho_j})\}_{j\ge1}$. In this illustration,
we have $\nu(n)=k+2$.}
\label{figreg}
\end{figure}

Due to symmetry it means no restriction assuming that $z\in\Z^d$ lies
in the first orthant, that is, that the coordinate $z_i\ge0$ for each
$i=1,2,\ldots,d$. Let $\Hb_n:=\{z\in\Z^d\dvtx  z_1+z_2+\cdots+z_d=n\}$,
$r\ge0$, and pick $\tbar\in\R_+$ such that $\Pr(\tau_e\le\tbar
)>0$. The following notation will be used, and is illustrated in Figure \ref{figreg} below:
\begin{eqnarray*}
 V_n(z,r)&:=&\Cyl(z,r)\cap
\Hb_{n\|z\|},
\\
E_n(z,r)&:=&\bigl\{\mbox{edges connecting }\Cyl(z,r)\cap
\Hb_{n\|z\|
-1}\mbox{ with }\Cyl(z,r)\cap\Hb_{n\|z\|}\bigr\},
\\
A_n(z,r)&:=&\bigl\{\tau_e\le\tbar\mbox{ for all }e\in
E_n(z,r)\bigr\},
\\
\rho_j(z,r)&:=&\min\bigl\{n>\rho_{j-1}(z,r)\dvtx A_n(z,r)
\mbox{ occurs}\bigr\} \qquad\mbox{for }j\ge1, \rho_0=0.
\end{eqnarray*}
When understood from the context, the reference to $z$ and $r$ will be dropped.

Note that $\{A_n(z,r)\}_{n\geq1}$ are i.i.d., so the increments $\{
\rho_j-\rho_{j-1}\}_{j\ge1}$ are independent geometrically
distributed with success probability $\Pr(A_0(z,r))$. Consequently, $\{
T_{\Cyl(z,r)}(V_{\rho_{j-1}},V_{\rho_j})\}_{j\ge1}$ are i.i.d.
Introduce the following notation for their means:
\begin{eqnarray*}
\mu_\tau(z,r)&:=&\E\bigl[T_{\Cyl(z,r)}(V_{\rho_0},V_{\rho_1})
\bigr],
\\
\mu_\rho(z,r)&:=&\E[\rho_1-\rho_0],
\end{eqnarray*}
and, for the time constant for travel times restricted to cylinders, let
\[
\mu_{\Cyl(z,r)}:=\lim_{n\to\infty}\frac{\E[T_{\Cyl(z,r)}(V_0,V_n)]}{n}.
\]
The existence of the above limit is given by Fekete's lemma, since that 
$ (-\E[T_{\Cyl(z,r)}(V_0,V_n)] )_{n\ge1}$ is a subadditive
sequence. A sufficient condition for the limit $\mu_{\Cyl(z,r)}$ to
be finite will be achieved with Proposition~\ref{propregincrement} below.

Finally, travel times on $\Cyl(z,r)$ and the sequence $\{T_{\Cyl
(z,r)}(V_{\rho_{j-1}},V_{\rho_j})\}_{j\ge1}$ will be compared via
optimal stopping. We define for that purpose the stopping time
\[
\nu(m)=\nu(m,z,r):=\min\bigl\{j\ge1\dvtx  \rho_j(z,r)>m\bigr\}.
\]
Note that $\nu(m)-1$ equals the number of $n\in\{1,2,\ldots,m\}$ for
which $A_n(z,r)$ occurs, which is binomially distributed with success
probability $\Pr(A_0(z,r))=\mu_\rho(z,r)^{-1}$.

\begin{remark*}
A geometrical constraint should be noted. For some $z\in\Z^d$, there
may not be any paths at all between $x$ and $y$ that only passes
through points in $\Cyl(z,r)$, when $r$ is small. [In this case set
$T_{\Cyl(z,r)}(x,y)=\infty$.] However, it is not hard to realize that
for every $k\ge1$, there is $R=R(d,k)$ such that for every $z\in\Z
^d$ and $r\ge R$ there are $k$ edge-disjoint paths from $V_0(z,r)$ to
$V_1(z,r)$ of length $\|z\|$, which are all contained in $\Cyl(z,r)$.
($R=k\sqrt{d}$ is sufficient.)
\end{remark*}

\subsection{Tail and moment comparisons}

The next task will be to relate tail probabilities of travel times and
moments of $T_{\Cyl(z,r)}(V_{\rho_0},V_{\rho_1})$ with the
corresponding quantities for $Y$. The latter will provide a sufficient
condition for $\mu_{\Cyl(z,r)}$ to be finite and converge to $\muZ
(z)$ as $r\to\infty$. We begin with a well-known tail comparison.

\begin{lma}\label{lmaYcond}
For every $z\in\Z^d$, $x\ge0$ and large enough $r$,
\[
\Pr\bigl(T_{\Cyl(z,r)}(0,z)>9\|z\|x \bigr) \le9^{2d}\|z\|
\Pr(Y>x).
\]
\end{lma}

\begin{pf}
Note that there are $2d$ edge disjoint paths between the origin and
$\ebf_1$ of length at most 9. Denote these paths by $\Gamma_1,\Gamma
_2,\ldots,\Gamma_{2d}$, and assume that $\Gamma_1$ is the longest
among them. Clearly,
\[
\Pr\Bigl(\min_{i=1,2,\ldots,2d}T(\Gamma_i)>9x \Bigr) \le
\Pr\bigl(T(\Gamma_1)>9x \bigr)^{2d} \le9^{2d}
\Pr(\tau_e>x)^{2d}.
\]
For $r$ large, $T_{\Cyl(z,r)}(0,z)$ is dominated by $\|z\|$ random
variables distributed as $\min_{i=1,2,\ldots,2d}T(\Gamma_i)$. Consequently,
\begin{eqnarray*}
\Pr\bigl(T_{\Cyl(z,r)}(0,z)>9\|z\|x \bigr) &\le&
\|z\| \Pr\Bigl(\min_{i=1,2,\ldots,2d}T(\Gamma_i)>9x \Bigr)
\\
&\le&9^{2d}\|z\| \Pr(Y>x),
\end{eqnarray*}
as required.
\end{pf}

In preparation for the second aim, we have a couple of lemmata of
general character.

\begin{lma}\label{lmaYcomp1}
Let $\{\tau_i\}_{i\ge1}$ be a collection of nonnegative i.i.d. random
variables. For any $\alpha,\beta>0$ and integers $L\ge K\ge1$
such that $\beta K\le\alpha L$, then
\[
\E\Bigl[ \Bigl(\min_{i\le L}\tau_i
\Bigr)^\beta\Bigr] \le1+\frac
{\beta}{\alpha}\E\Bigl[ \Bigl(\min
_{i\le K}\tau_i \Bigr)^\alpha
\Bigr]^{L/K}.
\]
\end{lma}

\begin{pf}
Recall the formula $\E[X^\alpha]=\alpha\int_0^\infty x^{\alpha
-1}\Pr(X>x) \,dx$, valid for nonnegative random variables and $\alpha
>0$. Note that for any $x\ge1$ Markov's inequality gives that
\[
\Pr(\tau_i>x) = \Pr\Bigl(\min_{i\le K}
\tau_i>x \Bigr)^{1/K} \le\frac{\E[ (\min_{i\le K}\tau_i )^\alpha
]^{1/K}}{x^{\alpha/K}},
\]
from which one, under the imposed conditions, easily obtains
\[
x^{\beta-1}\Pr(\tau_i>x)^L \le x^{\alpha-1}\Pr
\Bigl(\min_{i\le K}\tau_i>x \Bigr)\cdot\E\Bigl[
\Bigl(\min_{i\le K}\tau_i \Bigr)^\alpha
\Bigr]^{(L-K)/K}.
\]
Finally, integrating over the intervals $[0,1)$ and $[1,\infty)$
separately yields
\begin{eqnarray*}
\E\Bigl[ \Bigl(\min_{i\le L}
\tau_i \Bigr)^\beta\Bigr] &\le&1+\beta\E\Bigl[ \Bigl(\min
_{i\le K}\tau_i \Bigr)^\alpha
\Bigr]^{(L-K)/K}\int_{x\ge1}x^{\alpha-1}\Pr\Bigl(\min
_{i\le K}\tau_i>x \Bigr) \,dx
\\
&=& 1+\frac{\beta}{\alpha}\E\Bigl[ \Bigl(\min_{i\le K}
\tau_i \Bigr)^\alpha\Bigr]^{1+(L-K)/K},
\end{eqnarray*}
as required.
\end{pf}

\begin{lma}\label{lmaYcomp2}
Let $\{\tau_{i,j}\}_{i,j\ge1}$ be a collection of nonnegative i.i.d.
random variables. For any $\alpha,\beta>0$ and integers $K,N\ge1$
and $L\ge K$ satisfying $\beta K\le\alpha L$,
\[
\E\biggl[ \biggl(\min_{i\le L}\sum
_{j\le N}\tau_{i,j} \biggr)^\beta\biggr] \le
N^{L+\beta} \biggl(1+\frac{\beta}{\alpha}\E\Bigl[ \Bigl(\min
_{i\le K}\tau_{i,j} \Bigr)^\alpha
\Bigr]^{L/K} \biggr).
\]
\end{lma}

\begin{pf}
First, since if a sum of $N$ nonnegative numbers is greater than $x$,
then at least one of the terms has to be greater than $x/N$, it follows that
\[
\Pr\biggl(\min_{i\le L}\sum_{j\le N}
\tau_{i,j}>x \biggr) = \Pr\biggl(\sum_{j\le N}
\tau_{i,j}>x \biggr)^L \le N^L \Pr(\tau
_{i,j}>x/N )^L.
\]
Thus, via the substitution $x=Ny$, we conclude that
\begin{eqnarray*}
\E\biggl[ \biggl(\min_{i\le L}\sum
_{j\le N}\tau_{i,j} \biggr)^\beta
\biggr] &\le&\beta N^L\int_0^\infty
x^{\beta-1}\Pr(\tau_{i,j}>x/N )^L \,dx
\\
&=& N^{L+\beta}\E\Bigl[ \Bigl(\min_{i\le L}
\tau_{i,j} \Bigr)^\beta\Bigr],
\end{eqnarray*}
from which the statement follows via Lemma~\ref{lmaYcomp1}.
\end{pf}

\begin{prop}\label{propregincrement}
For every $\beta\ge\alpha>0$ and $z\in\Z^d$, there is a finite
constant $R_1=R_1(\alpha,\beta,d)$ such that for $r\ge R_1$ and some
finite constant $M_1=M_1(\alpha,\beta,d,z,r)$,
\[
\E\bigl[T_{\Cyl(z,r)}(V_{\rho_0},V_{\rho_1})^\beta
\bigr]\le M_1 \bigl(1+\E\bigl[Y^\alpha\bigr]
\bigr)^{\beta/\alpha+1}.
\]
\end{prop}

\begin{pf}
If $\Pr(\tau_e>\tbar)=0$, then $\rho_1=1$ and the statement is a
consequence of Lemma~\ref{lmaYcomp2}. Assume instead the contrary, in
which case a bit more care is needed before appealing to Lemma~\ref
{lmaYcomp2}.

Let $\eta=\{\eta_e\}_{e\in\Ec}$ denote the family of indicator
functions $\eta_e=\ind_{\{\tau_e>\tbar\}}$. Independently of $\{
\tau_e\}_{e\in\Ec}$, let $\{\tilde{\tau}_e\}_{e\in\Ec}$ be a
collection of independent random variables distributed as $\Pr(\tilde
{\tau}_e\in\cdot)=\Pr(\tau_e\in\cdot|\tau_e>\tbar)$, and
define $\{\sigma_e\}_{e\in\Ec}$ as
\[
\sigma_e:=\cases{ \tau_e, &\quad if $
\eta_e=1$,
\cr
\tilde{\tau}_e, &\quad if $
\eta_e=0$.}
\]
Note that $\{\sigma_e\}_{e\in\Ec}$ is an i.i.d. family independent
of $\eta$, but that $\eta$ determines $\{A_n(z,r)\}_{n\geq1}$, and
hence $\{\rho_j-\rho_{j-1}\}_{j\ge1}$, for every $z$ and $r$. In
particular, $\{\sigma_e\}_{e\in\Ec}$ and $\{\rho_j-\rho_{j-1}\}
_{j\ge1}$ are independent. Let $T_{\Cyl}'(x,y)$ denote the passage
time between $x$ and $y$ with respect to $\{\sigma_e\}_{e\in\Ec}$.
By construction $\tau_e\le\sigma_e$ for every $e\in\Ec$, so
$T_{\Cyl}(x,y)\le T_{\Cyl}'(x,y)$.

Fix $\beta\ge\alpha>0$ and $z\in\Z^d$. Choose $r=r(\alpha,\beta,d)$
large so that there are at least $2d\beta/\alpha$ disjoint paths
between $V_0(z,r)$ and $V_1(z,r)$ of length $\|z\|$, contained in $\Cyl
(z,r)$. Similarly, there are equally many paths between $V_{\rho
_0}(z,r)$ and $V_{\rho_1}(z,r)$ of length $(\rho_1-\rho_0)\|z\|$.
Hence, by Lemma~\ref{lmaYcomp2}
\begin{eqnarray*}
&& \E\bigl[T_\Cyl'(V_{\rho_0},V_{\rho_1})^\beta
|\eta\bigr]
\\
&&\qquad \le\bigl((\rho_1-\rho_0)\|z\|
\bigr)^{2d\beta/\alpha+\beta+1} \biggl(1+\frac{\beta}{\alpha}\E\Bigl[
\Bigl(\min
_{i\le2d}\sigma_i \Bigr)^\alpha
\Bigr]^{\beta/\alpha+1/(2d)} \biggr),
\end{eqnarray*}
where $\sigma_1,\sigma_2,\ldots,\sigma_{2d}$ denote independent
variables distributed as $\sigma_e$. In addition,
\begin{eqnarray*}
\E\Bigl[ \Bigl(\min_{i\le2d}
\sigma_i \Bigr)^\alpha\Bigr] &=& \alpha\int
_0^\infty x^{\alpha-1} \Pr(
\tau_e>x|\tau_e>\tbar)^{2d}\,dx
\\
&\le&\E\Bigl[ \Bigl(\min_{i\le2d}\tau_i
\Bigr)^\alpha\Bigr]\Pr(\tau_e>\tbar)^{-2d}.
\end{eqnarray*}
Since $T_\Cyl(V_{\rho_0},V_{\rho_1})\le T_\Cyl'(V_{\rho_0},V_{\rho
_1})$, and $\rho_1-\rho_0$ is geometrically distributed, the bound
follows easily.\vadjust{\goodbreak}
\end{pf}

\subsection{Time constant comparison}

Proposition~\ref{propregincrement} gives, in particular, a criterion
for $\mu_\tau(z,r)$ and $\mu_{\Cyl(z,r)}$ to be finite.

\begin{lma}\label{lmaregmu}
Assume that $\E[Y^\alpha]<\infty$ for some $\alpha>0$. Then $\mu
_\tau(z,r)$ is finite for all $z\in\Z^d$ and $r\ge R_1$, where $R_1$
is given by Proposition~\ref{propregincrement} (with $\beta=1$). Moreover,
\[
\frac{\mu_\tau(z,r)}{\mu_\rho(z,r)} \le\mu_{\Cyl(z,r)} \le\frac{\mu
_\tau(z,r)}{\mu_\rho(z,r)}+
\frac{\tbar|E_n|}{\mu
_\rho(z,r)}.
\]
\end{lma}

\begin{pf}
The former assertion is the content of Proposition~\ref
{propregincrement}. For the latter, note that
\begin{eqnarray*}
\sum_{j=1}^{\nu(n)-1}T_{\Cyl(z,r)}(V_{\rho
_{j-1}},V_{\rho_j})
&\le& T_{\Cyl(z,r)}(V_0,V_n)
\\
&\le&\sum_{j=1}^{\nu(n)}T_{\Cyl(z,r)}(V_{\rho_{j-1}},V_{\rho
_j})+
\tbar|E_n|\bigl(\nu(n)-1\bigr).
\end{eqnarray*}
Taking expectations, the right-hand side is via Wald's lemma turned into
\[
\E\bigl[\nu(n)\bigr]\E\bigl[T_{\Cyl(z,r)}(V_{\rho_0},V_{\rho_1})
\bigr]+\tbar|E_n|\E\bigl[\nu(n)-1\bigr],
\]
which after division by $n$ gives
\[
\biggl(\frac{1}{\mu_\rho(z,r)}+\frac{1}{n} \biggr)\mu_\tau(z,r)+
\frac{\tbar|E_n|}{\mu_\rho(z,r)}.
\]
Sending $n$ to infinity thus gives the upper bound.

For the lower bound, it suffices to show that $\E[T_{\Cyl
(z,r)}(V_{\rho_{\nu(n)-1}},V_{\rho_{\nu(n)}}) ]$ is bounded.
This follows from $T_{\Cyl(z,r)}(V_{\rho_{\nu(n)-1}},V_{\rho_{\nu
(n)}})$ being stochastically dominated by $T_{\Cyl(z,r)}(V_{\rho
_{-1}},V_{\rho_1})$, where $\rho_{-1}=\max\{n\le0\dvtx A_n(z,r)
\mbox{ occurs}\}$. Now, we have not proven that $T_{\Cyl
(z,r)}(V_{\rho_{-1}},V_{\rho_1})$ has finite mean, but that follows
readily from the proof of Proposition~\ref{propregincrement}.
\end{pf}

Since $\mu_{\Cyl(z,r)}$ is decreasing in $r$, it seems reasonable
that it should converge to its lower bound $\muZ(z)$, as $r$ tends to
infinity. An indication of this was given already in~\cite{chacha84}
and~\cite{kesten86}, but proofs of this fact may have appeared only
more recently, see, for example, \cite{A08thesis,A-1}. These proofs assume
finite expectation of $Y$, and to extend them to a minimal moment
condition requires some care.

\begin{prop}\label{proptubeconstant}
Assume that $\E[Y^\alpha]<\infty$ for some $\alpha>0$. Then, for
every $z\in\Z^d$,
\[
\lim_{r\to\infty}\mu_{\Cyl(z,r)}=\muZ(z).
\]
\end{prop}

\begin{pf}
Fix $z\in\Z^d$ and, based on Proposition~\ref{propregincrement},
pick $s>0$ sufficiently large for $\E[T_{\Cyl(z,s)}(V_{\rho
_0(z,s)},V_{\rho_1(z,s)})^2]$ to be finite. To the end of this proof,
let $V_n=V_n(z,s)$, $E_n=E_n(z,s)$, $\rho_j=\rho_j(z,s)$, $\nu
(n)=\nu(n,z,s)$, and $\epsilon(s)=\tlarge|E_0(z,s)|$. For $r\ge
s$, let
\[
a_n(r,s):=\E\bigl[T_{\Cyl(z,r)}(V_{\rho_1(z,s)},V_{\rho_{\nu
(n)}(z,s)})
\bigr]+\epsilon(s).
\]
The sequence $ (a_n(r,s) )_{n\ge1}$ is subadditive, that is,
$a_{n+m}(r,s)\le a_n(r,s)+a_m(r,s)$ for all $n,m\ge1$, as a
consequence of the inequality (note that $\rho_1=\rho_{\nu(0)}$)
\begin{eqnarray*}
\E\bigl[T_{\Cyl(z,r)}(V_{\rho_1},V_{\rho_{\nu(n+m)}})
\bigr]+\epsilon(s) &\le&\E\bigl[T_{\Cyl(z,r)}(V_{\rho_{\nu
(0)}},V_{\rho_{\nu(n)}})
\bigr]
\\
&&{} +\E\bigl[T_{\Cyl(z,r)}(V_{\rho_{\nu(n)}},V_{\rho_{\nu
(n+m)}}) \bigr]+2
\epsilon(s).
\end{eqnarray*}
Recall Fekete's lemma, which says that for any subadditive sequence the
limit $\lim_{n\to\infty}\frac{1}{n}a_n(r,s)$ exists and equals
$\inf_{n\ge1}\frac{1}{n}a_n(r,s)$. This holds for every $r\ge s$,
including the case $r=\infty$ in which the cylinder equals the whole lattice.

Next note the inequality
\begin{eqnarray*}
T_{\Cyl(z,r)}\bigl(V_0(z,r),V_n(z,r)
\bigr) &\le& T_{\Cyl
(z,r)}\bigl(V_0(z,s),V_n(z,s)
\bigr)
\\
&\le& T_{\Cyl(z,r)}(V_0,V_{\rho_1})+T_{\Cyl(z,r)}(V_{\rho
_1},V_{\rho_{\nu(n)}})
\\
&&{} +T_{\Cyl(z,r)}(V_n,V_{\rho_{\nu(n)}})+2\epsilon(s),
\end{eqnarray*}
which shows that $\mu_{\Cyl(z,r)}\le\lim_{n\to\infty}\frac
{1}{n}a_n(r,s)$ for every $r\ge s$. Consequently,
\begin{eqnarray*}
\lim_{r\to\infty}\mu_{\Cyl(z,r)} &=&
\inf_{r\ge0}\mu_{\Cyl
(z,r)} \le\inf_{r\ge s}
\inf_{n\ge1}\frac{a_n(r,s)}{n} = \inf_{n\ge1}\inf
_{r\ge s}\frac{a_n(r,s)}{n}
\\
&=& \inf_{n\ge1}\lim_{r\to\infty}\frac{a_n(r,s)}{n} =
\lim_{n\to\infty}\frac{\E[T(V_{\rho_1},V_{\rho_{\nu(n)}})]}{n},
\end{eqnarray*}
where we in the second to last step have used that $T_{\Cyl
(z,r)}(\cdot,\cdot)$ is decreasing in $r$, and in the finial step
have appealed to the monotone convergence theorem and used that
$(a_n(\infty,s))_{n\ge1}$ is subadditive. It remains to prove that
the limit equals $\muZ(z)$.

We will proceed by showing that $\frac{1}{n}T(V_{\rho_1},V_{\rho
_{\nu(n)}})\to\muZ(z)$ in probability as $n\to\infty$, and then
argue that the limit carries over in the mean due to uniform
integrability of the family $\{\frac{1}{n}T(V_{\rho_1},V_{\rho_{\nu
(n)}})\}_{n\ge1}$. We begin proving convergence in probability. By
subadditivity,
\[
\bigl|T(V_{\rho_1},V_{\rho_{\nu(n)}})-T(0,nz) \bigr| \le T(0,V_{\rho_{\nu
(0)}})+T(nz,V_{\rho_{\nu(n)}}).
\]
Since the distributions of the dominating two terms are independent of
$n$, the convergence of $\frac{1}{n}T(V_{\rho_1},V_{\rho_{\nu
(n)}})$ in probability to $\muZ(z)$ follows from the convergence of
$\frac{1}{n}T(0,nz)$ in~(\ref{deftimeconstant}).

The\vspace*{1pt} condition $\sup_{n\ge1}\E[ (\frac{1}{n}T(V_{\rho
_1},V_{\rho_{\nu(n)}}) )^\alpha]<\infty$, for some $\alpha
>1$, is sufficient for uniform integrability. To see that this holds,
observe that
\[
T(V_{\rho_1},V_{\rho_{\nu(n)}}) \le T_{\Cyl(z,s)}(V_{\rho
_1},V_{\rho_{n+1}})
\le\sum_{j=2}^{n+1}T_{\Cyl(z,s)}(V_{\rho
_{j-1}},V_{\rho_j})+n
\epsilon(s),
\]
where we have used that $\nu(n)-1\le n$. Thus, by convexity of the
function $x^2$, we find that
\[
\biggl(\frac{1}{n}T(V_{\rho_1},V_{\rho_{\nu(n)}})
\biggr)^2 \le2 \Biggl(\frac{1}{n}\sum
_{j=2}^{n+1}T_{\Cyl(z,s)}(V_{\rho
_{j-1}},V_{\rho_j})^2+
\epsilon(s)^2 \Biggr).
\]
Since the terms in the sum are i.i.d., we obtain
\[
\E\biggl[ \biggl(\frac{1}{n}T(V_{\rho_1},V_{\rho_{\nu(n)}})
\biggr)^2 \biggr] \le2\E\bigl[T_{\Cyl(z,s)}(V_{\rho_0},V_{\rho
_1})^2
\bigr]+2\epsilon(s)^2,
\]
which is finite and independent of $n$. Thus, $\{\frac{1}{n}T(V_{\rho
_1},V_{\rho_{\nu(n)}})\}_{n\ge1}$ is uniformly integrable, and
\[
\lim_{n\to\infty}\frac{\E[T(V_{\rho_1},V_{\rho_{\nu
(n)}})]}{n}=\muZ(z),
\]
as required.
\end{pf}

\section{Large deviations above the time constant}\label{secLDabove}

In this section, we estimate the probability of large deviations above
the time constant and prove Theorem~\ref{teoLDabove}. Recall that it
suffices to consider $z$ in the first orthant, due to symmetry. The
regenerative approach set up for in the previous section will serve to
obtain a first modest estimate on the tail decay. This first step of
the proof is as follows.

\begin{lma}\label{lmaLDtail}
Assume that $\E[Y^\alpha]<\infty$ for some $\alpha>0$. There exists
$R_2=R_2(\alpha,d)$ such that for every $\varepsilon>0$, $z\in\N
^d$ and $r\ge R_2$, there is a finite constant $M_2=M_2(\alpha
,\varepsilon,d,z,r)$ such that for every $n\in\N$ and $x\ge n$
\[
\Pr\bigl(T_{\Cyl(z,r)}(\Hb_0,\Hb_{n\|z\|})-n
\mu_{\Cyl
(z,r)}>\varepsilon x\|z\| \bigr) \le\frac{M_2}{x}.
\]
\end{lma}

\begin{pf}
We may without loss of generality assume that $\alpha\in(0,2]$. Let
$\beta=2$ and let $R_1=R_1(\alpha,d)$ be given as in Proposition~\ref
{propregincrement}. In particular, $\mu_{\Cyl(z,r)}$ is finite for
$r\ge R_1$. Fix $r\ge R_1$, $\varepsilon>0$ and choose $N\in\N$
large enough for $2\tbar|E_0(z,r)|\le\varepsilon N\|z\|$ to hold.
Set $y=Nz$ and let $m_n=\max\{m\ge0\dvtx mN\le n\}$. To the end of this\vadjust{\goodbreak}
proof, let $V_n=V_n(y,r)$, $E_n=E_n(y,r)$, $\rho_j=\rho_j(y,r)$, $\nu
(n)=\nu(n,y,r)$, $\mu_\tau=\mu_\tau(y,r)$, and $\mu_\rho=\mu
_\rho(y,r)$. (For a relation between $nz$, $m_ny$ and $\rho_{\nu
(m_n)}y$, see Figure~\ref{figsparse}.) Recall that $\rho_0=0$, so
$V_{\rho_0}(y,r)=V_0(z,r)$. By subadditivity,
\begin{eqnarray*}
T_{\Cyl(z,r)}(\Hb_0,
\Hb_{n\|z\|})-n\mu_{\Cyl(z,r)} &\le&\sum_{j=1}^{\nu(m_n)}
\bigl(T_{\Cyl(z,r)}(V_{\rho_{j-1}},V_{\rho_j})-\mu_\tau
\bigr)
\\
&&{} +T_{\Cyl(z,r)}(V_{\rho_{\nu(m_n)}},\Hb_{n\|z\|})
\\
&&{} + \bigl(\nu(m_n)\mu_\tau-n\mu_{\Cyl(z,r)} \bigr)+
\sum_{j=1}^{\nu
(m_n)}\tbar|E_0|.
\end{eqnarray*}
Label the four terms on the right-hand side as $X_1,X_2,X_3,X_4$. Since
that $\sum_{i\le4}X_i>4\varepsilon x\|z\|$ would imply that
$X_i>\varepsilon x\|z\|$ for some $i=1,2,3,4$, and since $\varepsilon
>0$ was arbitrary, it suffices to obtain a bound on $\Pr
(X_i>\varepsilon x\|z\|)$ of the desired form, for each $i=1,2,3,4$ separately.
%
\begin{figure}

\includegraphics{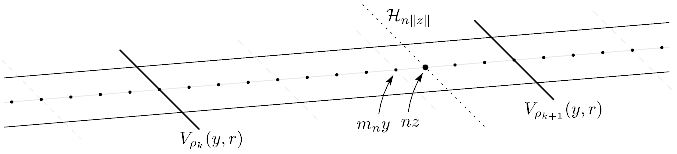}

\caption{Dots indicate $\{jz\}_{j\ge0}$, dashed diagonal lines $\{
V_j(y,r)\}_{j\ge0}$, and thick diagonal lines $\{V_{\rho_j}(y,z)\}
_{j\ge0}$. In the illustration we have $y=4z$ and $\nu(m_n)=\nu
(m_n,y,r)=k+1$.}
\label{figsparse}
\end{figure}

Starting from behind, since $\nu(m_n)\le m_n+1\le n/N+1$, it follows
that for $n\ge N$, $\Pr(\nu(m_n)\tbar|E_0|>\varepsilon n\|z\|
)=0$ by the choice of $N$. So, the last term satisfies a bound on
the desired form. Since $\mu_{\Cyl(y,r)}=N\mu_{\Cyl(z,r)}$, the
third term is via Lemma~\ref{lmaregmu} bounded above by
\[
\nu(m_n)\mu_\rho\mu_{\Cyl(y,r)}-m_n
\mu_{\Cyl(y,r)}.
\]
Recall that $\nu(m)-1$ counts the number of $k\in\{1,2,\ldots,m\}$
for which $A_k(y,r)$ occurs, and is therefore binomially distributed
with success probability\break $\Pr(A_n(y,r) )=1/\mu_\rho$. For
large $n$, we will have $\varepsilon x\|z\|/2>\mu_\rho\mu_{\Cyl
(y,r)}$. Thus, for large $n$ Chebyshev's inequality may be applied to give
\[
\Pr\biggl(\nu(m_n)\mu_\rho-m_n>\varepsilon
\frac{x\|z\|}{\mu
_{\Cyl(y,r)}} \biggr) \le
4\mu_{\Cyl(y,r)}^2
\frac{\mu_\rho-1}{\varepsilon^2N\|z\|^2x},
\]
which also meets the requirement.

For $\beta=2$ and $y=Nz$, let $M_1=M_1(\alpha,d,y,r)$ be given as in
Proposition~\ref{propregincrement}. Since $m_n\|y\|\le n\|z\|<\rho
_{\nu(m_n)}\|y\|$, then $T_{\Cyl(z,r)}(\Hb_{n\|z\|},V_{\rho_{\nu
(m_n)}})\le T_{\Cyl(z,r)}(V_{m_n},V_{\rho_{\nu(m_n)}})$, which is
distributed as $T_{\Cyl(z,r)}(V_{\rho_0},V_{\rho_1})$. Consequently,
Markov's inequality and Proposition~\ref{propregincrement} give that
\[
\Pr\bigl(T_{\Cyl(z,r)}(V_{\rho_{\nu(m_n)}},\Hb_{n\|z\|
})>\varepsilon x
\|z\| \bigr) \le M_1\frac{(1+\E[Y^\alpha
])^{1+1/\alpha}}{\varepsilon\|z\|x},
\]
for $r\ge R_1$. Finally, by Wald's lemma $\sum_{j=1}^{\nu(m_n)}
(T_{\Cyl(z,r)}(V_{\rho_{j-1}},V_{\rho_j})-\mu_\tau)$ has mean
zero and second moment
\[
\E\Biggl[ \Biggl(\sum_{j=1}^{\nu(m_n)}
\bigl(T_{\Cyl(z,r)}(V_{\rho
_{j-1}},V_{\rho_j})-\mu_\tau
\bigr) \Biggr)^2 \Biggr] = \Var\bigl(T_{\Cyl(z,r)}(V_{\rho_0},V_{\rho_1})
\bigr)\E\bigl[\nu(m_n) \bigr].
\]
Using Chebyshev's inequality, Proposition~\ref{propregincrement} and
the identity $\E[\nu(m_n)]=1+m_n\mu_\rho^{-1}$ shows that $\Pr
(\sum_{j=1}^{\nu(m_n)} (T_{\Cyl(z,r)}(V_{\rho_{j-1}},V_{\rho
_j})-\mu_\tau)>\varepsilon x\|z\| )$ is bounded above by
\[
M_1\frac{(1+\E[Y^\alpha])^{1+2/\alpha}(\mu_\rho
^{-1}N^{-1}+x^{-1})}{\varepsilon^2\|z\|^2x},
\]
for all $r\ge R_1$, as required.
\end{pf}

In the second step, we improve upon the above decay by aligning
disjoint cylinders.

\begin{prop}\label{propLDtail}
Assume that $\E[Y^\alpha]<\infty$ for some $\alpha>0$. For every
$\varepsilon>0$, $q\ge1$ and $z\in\Z^d$ there exists
$M_3=M_3(\varepsilon,\alpha,q,d,z)$ such that for all $n\in\N$ and
$x\ge n$
\[
\Pr\bigl(T(0,nz)-n\muZ(z)>\varepsilon x\|z\| \bigr) \le M_3 \Pr
(Y> x/M_3)+\frac{M_3}{x^q}.
\]
\end{prop}

\begin{pf}
We may assume that $z$ lies in the first orthant due to symmetry. Fix
$\varepsilon>0$, $q\in\Z_+$ and choose $r\ge R_2$ large enough for
$\mu_{\Cyl(z,r)}-\muZ(z)\le\varepsilon\|z\|$ to hold, where
$R_2=R_2(\alpha,d)$ is as in Lemma~\ref{lmaLDtail}. Pick
$v_1,v_2,\ldots,v_q\in\Hb_0$ such that the transposed cylinders
$v_i+\Cyl(z,r)$ are pairwise disjoint, and choose $s>r$ so that
$v_i+\Cyl(z,r)\subseteq\Cyl(z,s)$ for all $i=1,2,\ldots,q$. For
such $s$, the travel time $T_{\Cyl(z,s)}(\Hb_0,\Hb_{n\|z\|})$ is
clearly dominated by the minimum of $q$ independent random variables
distributed as $T_{\Cyl(z,r)}(\Hb_0,\Hb_{n\|z\|})$. Thus,
\begin{eqnarray*}
&&\Pr\bigl(T_{\Cyl(z,s)}(\Hb_0,
\Hb_{n\|z\|})-n\mu_{\Cyl
(z,r)}>\varepsilon x\|z\| \bigr)
\\
&&\qquad\le\Pr\bigl(T_{\Cyl(z,r)}(\Hb_0,\Hb_{n\|z\|})-n
\mu_{\Cyl(z,r)}>\varepsilon x\|z\| \bigr)^q, 
\end{eqnarray*}
which by Lemma~\ref{lmaLDtail} is at most $M_2^q/x^q$. Since $\mu
_{\Cyl(z,r)}-\muZ(z)\le\varepsilon\|z\|$ by the choice of $r$, we obtain
%
\begin{equation}
\label{eqtubebound} \Pr\bigl(T_{\Cyl(z,s)}(\Hb_0,
\Hb_{n\|z\|})-n\muZ(z)>2\varepsilon x\|z\| \bigr) \le\frac{M_2^q}{x^q}.
\end{equation}
It remains to connect to the starting and ending points $0$ and $nz$.
Subadditivity gives that
\[
T(0,nz) \le\sum_{v\in V_0(z,s)} \bigl(T(0,v)+T(v+nz,nz)
\bigr)+T_{\Cyl(z,s)}(\Hb_0,\Hb_{n\|z\|}),
\]
so we may via Lemma~\ref{lmaYcond} find an $M_3'$, depending on
$\varepsilon$, $d$ and $s$, such that
\[
\Pr\bigl(T(0,nz)-n\muZ(z)>3\varepsilon x\|z\| \bigr) \le M_3'
\Pr\bigl(Y>x/M_3'\bigr)+\frac{M_2^q}{x^q}.
\]
Since $\varepsilon>0$ was arbitrary, the proof is complete.
\end{pf}

Before completing the proof of Theorem~\ref{teoLDabove}, we will show
that travel times cannot be too large. This result will help us to
loose the dependence on $z$ still present in Proposition~\ref{propLDtail}.

\begin{prop}\label{propLDM}
Assume that $\E[Y^\alpha]<\infty$ for some $\alpha>0$. For every
$q\ge1$ there is a constant $M_4=M_4(\alpha,q,d)$ such that for all
$z\in\Z^d$ and $x\ge\|z\|$
\[
\Pr\bigl(T(0,z)>M_4x \bigr) \le M_4 \Pr(Y>x)+
\frac{1}{x^q}.
\]
\end{prop}

\begin{pf}
Again, assume that $z$ lies in the first orthant. Let $M_3$ denote the
constant figuring in Proposition~\ref{propLDtail} (for given $\alpha
$ and $q$, and with $\varepsilon=1$ and $z=\ebf_1$). The point $z$
can be reached from $0$ in $d$ steps by in each step taking $z_i$ steps
in direction $\ebf_i$, for $i=1,2,\ldots,d$. Thus, due to
subadditivity and Proposition~\ref{propLDtail},
\[
\Pr\bigl(T(0,z)>dM_4'x \bigr)\le\sum
_{i=1}^d\Pr\bigl(T(0,z_i\ebf
_i)>M_4'x \bigr)\le dM_3
\Pr(Y>x/M_3)+\frac{dM_3}{x^q}
\]
for any $M_4'\ge\muZ(\ebf_1)+1$. Hence, $M_4=d^2M_3M_4'$ is sufficient.
\end{pf}

\begin{pf*}{Proof of Theorem~\ref{teoLDabove}}
Fix $\varepsilon>0$. To start, we will choose a finite set of
directions so that each $z\in\Z^d$ will be within distance
$\varepsilon\|z\|$ of some straight line intersecting the origin, and
continues in one of the chosen directions. The set of directions can be
chosen as $\{y\in\Z^d\dvtx \|y\|=N\}$, given that $N$ is large enough.
More precisely, pick $N\ge d/\varepsilon$ and note that any $z\in\Z
^d$ satisfying $mN\le\|z\|<(m+1)N$ will be within $\ell^1$-distance
$N+dm$ of some point in $\{my\dvtx \|y\|=N\}$. In particular, for $\|z\|\ge
N/\varepsilon$ we have
\[
N+dm \le\varepsilon\|z\|+d\|z\|/N \le2\varepsilon\|z\|.
\]

Given $z\in\Z^d$, let $m_z:=\max\{m\ge0\dvtx mN\le\|z\|\}$. First, note
that if $\|z\|\le N/\varepsilon$, then
\[
\Pr\bigl(T(0,z)-\muZ(z)>\varepsilon x \bigr) \le\Pr\bigl
(T(0,z)>\varepsilon x
\bigr) \le9^{2d}(N/\varepsilon)\Pr\bigl(Y>\varepsilon^2
x/(9N) \bigr),
\]
by Lemma~\ref{lmaYcond}. So, we may proceed assuming that $\|z\|\ge
N/\varepsilon$. For any $y$ with $\|y\|=N$, we have
%
\[
T(0,z)-\muZ(z) \le T(0,m_zy)-m_z\muZ(y)+T(m_zy,z)+
\bigl(m_z\muZ(y)-\muZ(z) \bigr),
\]
and, for at least one of these $y$ (the one closest to $z$), since
$\muZ$ satisfies the properties of a norm,
%
\begin{equation}
\label{eqTmzmu} m_z\muZ(y)-\muZ(z) \le\muZ(m_zy-z) \le
\muZ(\ebf_1)\| m_zy-z\| \le2\muZ(\ebf_1)
\varepsilon\|z\|.
\end{equation}
Moreover, with $M_4$ as in Proposition~\ref{propLDM}, and $x\ge\|z\|
\ge\|m_zy-z\|/(2\varepsilon)$,
%
\begin{equation}
\label{eqTmz2} \Pr\bigl(T(m_zy,z)>2M_4\varepsilon x
\bigr) \le M_4\Pr(Y>2\varepsilon x)+\frac{1}{(2\varepsilon x)^q}.
\end{equation}

Since $N$ depends on nothing but $\varepsilon$, there is a constant
$M_3=M_3(\varepsilon,\alpha,q,d)$, given by Proposition~\ref
{propLDtail}, such that for every $y$ satisfying $\|y\|=N$, and $x\ge
\|z\|\ge m_zN$, then
%
\begin{equation}
\label{eqTmz3} \Pr\bigl(T(0,m_zy)-m_z\muZ(y)>
\varepsilon x \bigr) \le M_3\Pr\bigl(Y>x/(M_3N) \bigr)+
\frac{M_3N^q}{x^q}.
\end{equation}
Combining~(\ref{eqTmzmu}),~(\ref{eqTmz2}) and~(\ref{eqTmz3}), we
conclude that also for $x\ge\|z\|\ge N/\varepsilon$,
\[
\Pr\bigl(T(0,z)-\muZ(z)>\bigl(1+2\muZ(\ebf_1)+2M_4
\bigr)\varepsilon x \bigr) \le M \Pr(Y>x/M)+\frac{M}{x^q},
\]
where $M$ can be taken as the maximum of $M_4+M_3$, $1/(2\varepsilon
)^q$, and $M_3N^q$. Since $\varepsilon>0$ was arbitrary, this
completes the proof.
\end{pf*}

\section{Proof of the Hsu--Robbins--Erd{\H o}s strong law}\label{secHREproof}

Both Theorem~\ref{teoHRE} and Corollary~\ref{corLDsum} may be
thought of as strong laws of the kind introduced by Hsu, Robbins and
Erd\H{o}s. They are easily derived in a similar fashion from the large
deviation estimates presented in Theorems~\ref{teoLDbelow} and~\ref
{teoLDabove}. For that reason, we only present a proof of the former.

\begin{pf*}{Proof of Theorem~\ref{teoHRE}}
Deviations below and above the time constant are easily handled
separately via the identity
\begin{eqnarray*}
\Pr\bigl(\bigl|T(0,z)-\muZ(z)\bigr|>\varepsilon\|z\| \bigr)
&=& \Pr\bigl(T(0,z)-\muZ(z)<-\varepsilon\|z\| \bigr)
\\
&&{} +\Pr\bigl(T(0,z)-\muZ(z)>\varepsilon\|z\| \bigr).
\end{eqnarray*}
Summability of the probabilities of deviations below the time constant
is immediate from Theorem~\ref{teoLDbelow}, since $\Pr
(T(0,z)-\muZ(z)<-\varepsilon\|z\| )$ decays exponentially in $\|
z\|$, while the number of sites satisfying $\|z\|=n$ grows polynomially
in $n$.

Consider instead deviations above the time constant. Assume first that
$\E[Y^\alpha]<\infty$ for some $\alpha>0$. According to
Theorem~\ref{teoLDabove}, with $q=\alpha+1$, there is a constant
$M=M(\alpha,\varepsilon,d)$ such that
\begin{eqnarray*}
&&\sum_{z\in\Z^d}\|z
\|^{\alpha-d} \Pr\bigl(T(0,z)-\muZ(z)>\varepsilon\|z\| \bigr)
\\
&&\qquad\le M\sum_{z\in\Z^d} \biggl(\|z\|^{\alpha-d}
\Pr(Y>\|z\| /M)+\frac{1}{\|z\|^{d+1}} \biggr).
\end{eqnarray*}
Observe that the number of $z\in\Z^d$ for which $\|z\|=n$ is of order
$n^{d-1}$. The above summation is therefore finite since $\E[Y^\alpha
]<\infty$ implies that
\[
\sum_{n=1}^\infty n^{\alpha-1}
\Pr(Y>n/M)+\sum_{n=1}^\infty n^{-2}<
\infty.
\]

For the necessity of $\E[Y^\alpha]$ being finite, note that $T(0,z)$
is at least as large as the minimum value among the $2d$ edges adjacent
to $z$. For all large enough $M$, we therefore have
\begin{eqnarray*} \sum_{z\in\Z^d}\|z
\|^{\alpha-d} \Pr\bigl(T(0,z)-\muZ(z)>\varepsilon\|z\| \bigr) &\ge&
\sum
_{z\in\Z^d}\|z\|^{\alpha
-d} \Pr\bigl(Y>M\|z\|\bigr)
\\
&\ge&\sum_{n=1}^\infty n^{\alpha-1}
\Pr(Y>Mn),
\end{eqnarray*}
which is finite only if $\E[Y^\alpha]<\infty$.
\end{pf*}

\section{The sets of points in space and time of linear order deviations}\label{sectimediv}

In this section, we study the set of times $t$ for which the random set
of sites reachable within time $t$ from the origin deviates by as much
as a constant factor from the asymptotic shape. In particular, we will
see how Theorem~\ref{teoHRE} and the estimates on large deviation
above and below the time constants can be used to estimate moments of
$|\Tdiv_\varepsilon|$ and prove Theorem~\ref{teoST}. This will be
done by estimating the contribution of each site $z\in\Z^d$ to $\Tdiv
_\varepsilon$ separately.

Let $Y(z)$ denote the minimum of the $2d$ weights associated with the
edges incident to $z$. Note that $Y(y)$ and $Y(z)$ are independent as
soon as $y$ and $z$ are at \mbox{$\ell^1$-}distance at least 2. Since
$T(0,z)$ is at least as large as $Y(z)$, it is possible to obtain a
sufficient condition for $z$ to be contained in $\Zdiv_\varepsilon$
in terms of $Y(z)$. Recall that either $\muZ\equiv0$ or $\muZ$ is
bounded away from 0 and infinity on compact sets not containing the
origin. Consequently, $\bar{\muZ}:=\sup_{\|x\|=1}\muZ(x)$ is
finite. Thus, $Y(z)>(\bar{\muZ}+\varepsilon)\|z\|$ implies
that $z\in\Zdiv_\varepsilon$, and
%
\begin{equation}
\label{eqZbound} |\Zdiv_\varepsilon| = \sum_{z\in\Z^d}
\ind_{\{|T(0,z)-\muZ
(z)|>\varepsilon\|z\|\}} \ge\sum_{z\in\Z^d}
\ind_{\{Y(z)>\beta
\|z\|\}}, 
\end{equation}
for large enough $\beta=\beta(\varepsilon)$.\vadjust{\goodbreak}

A similar estimate can be obtained for the Lebesgue measure of $\Tdiv
_\varepsilon$ as well. Assume until this end that $\muZ\not\equiv
0$, in which case $\underline{\muZ}:=\inf_{\|x\|=1}\muZ(x)$ is
strictly positive. For $t\ge0$, let
\begin{eqnarray*}
A_t&:=&\bigl\{z\in\Z^d\dvtx T(0,z)>t\mbox{ and } \muZ(z)\le
t(1-\varepsilon)\bigr\},
\\
B_t&:=&\bigl\{z\in\Z^d\dvtx T(0,z)\le t\mbox{ and }
\muZ(z)>t(1+\varepsilon)\bigr\}.
\end{eqnarray*}
Note that $A_t\neq\varnothing$ is equivalent to $\Bc^{\muZ
}_{(1-\varepsilon)t}\not\subset\Bc_t$. Similarly, $B_t\neq
\varnothing$ if and only if $\Bc_t\not\subset\Bc^{\muZ
}_{(1+\varepsilon)t}$. Thus, $\Tdiv_\varepsilon=\{t\ge0\dvtx A_t\cup
B_t\neq\varnothing\}$, and the contribution of a site $z$ is given by
the interval of time for which $z$ is contained in either $A_t$ or
$B_t$. Denote these intervals by $I_A(z)$ and $I_B(z)$, respectively,
and note that
\[
\Tdiv_\varepsilon=\bigcup_{z\in\Z^d}I_A(z)
\cup I_B(z).
\]
Crude but useful upper bounds on the length of $I_A(z)$ and $I_B(z)$
are given by $T(0,z)$ and $\muZ(z)/(1+\varepsilon)$, respectively.
More precisely, we have
%
\begin{eqnarray}\label{eqTupperA}
\bigl|I_A(z)\bigr| &=& \bigl(T(0,z)-
\muZ(z)/(1-\varepsilon) \bigr)\ind_{\{
I_A(z)\neq\varnothing\}}
\nonumber\\[-8pt]\\[-8pt]
&\le&\bigl(T(0,z)-\muZ(z) \bigr)\ind_{\{T(0,z)-\muZ(z)>\beta\|z\|
\}}\nonumber
\end{eqnarray}
and
%
\begin{eqnarray}\label{eqTupperB} %
\bigl|I_B(z)\bigr| &=& \bigl(
\muZ(z)/(1+\varepsilon)-T(0,z) \bigr)\ind_{\{
I_B(z)\neq\varnothing\}}
\nonumber\\[-8pt]\\[-8pt]
&\le&\muZ(z)\ind_{\{T(0,z)-\muZ(z)<-\beta\|z\|\}},\nonumber
\end{eqnarray}
for any $\beta>0$ not larger than $\varepsilon\underline{\muZ
}/(1+\varepsilon)$. Moreover, for $\beta>\bar{\muZ
}/(1-\varepsilon)$
%
\begin{equation}
\label{eqTlower} \bigl|I_A(z)\bigr| \ge\bigl(Y(z)-\beta\|z\| \bigr)
\ind_{\{Y(z)>\beta\|z\|\}}.
\end{equation}
A first consequence of these representations for $|\Zdiv_\varepsilon
|$ and $|\Tdiv_\varepsilon|$ is the following simple observation.

\begin{prop}\label{propYd}
If $\E[Y^d]=\infty$, then $|\Zdiv_\varepsilon|$ and $|\Tdiv
_\varepsilon|$ are infinite for all $\varepsilon>0$, almost surely.
\end{prop}

\begin{pf}
Recall that the $Y(z)$'s are independent for points at $\ell
^1$-distance at least~2. $\E[Y^d]=\infty$ implies that $\sum_{\|z\|
\in2\N}\Pr(Y(z)>\beta\|z\|)=\infty$ for every \mbox{$\beta>0$}.
Consequently, $Y(z)>\beta\|z\|$ for infinitely many $z\in\Z^d$ via
the Borel--Cantelli lemma, almost surely, so $|\Zdiv_\varepsilon|$
is almost surely infinite. The same argument shows that also
$Y(z)>\beta\|z\|+1$ for infinitely many $z\in\Z^d$, and hence
(assuming $\muZ\not\equiv0$) $|\Tdiv_\varepsilon|$ is infinite
almost surely.
\end{pf}

On the other hand, that $\E[Y^d]<\infty$ is sufficient for the
expected cardinality of $\Zdiv_\varepsilon$ to be finite is
immediate from Theorem~\ref{teoHRE}. The first hint to why \mbox{$\E
[Y^{d+1}]<\infty$} is required in order for the expected Lebesgue
measure of $\Tdiv_\varepsilon$ to be finite is that although the
cardinality of $\Zdiv_\varepsilon$ is finite, the furthest point may
lie very far from the origin. For the furthest point to be expected
within finite distance, it is necessary that $\E[Y^{d+1}]<\infty$.
With a slight abuse of notation, we let $\sup\Zdiv_\varepsilon$
denote the \mbox{$\ell^1$-}distance to the furthest point in $\Zdiv
_\varepsilon$.

\begin{prop}\label{propsupZ}
For every $\alpha>0$ and $\varepsilon>0$,
\[
\E\bigl[Y^{d+\alpha}\bigr]<\infty\quad\Longleftrightarrow\quad\E\bigl[(\sup
\Zdiv
_\varepsilon)^\alpha\bigr]<\infty.
\]
\end{prop}

\begin{pf}
Fix $\alpha>0$ and $\varepsilon>0$. The sufficiency of $\E
[Y^{d+\alpha}]<\infty$ is immediate from Theorem~\ref{teoHRE}, since
\[
\E\bigl[(\sup\Zdiv_\varepsilon)^\alpha\bigr]\le\sum
_{z\in\Z^d}\|z\| ^\alpha\Pr\bigl(\bigl|T(0,z)-\muZ(z)\bigr|>
\varepsilon\|z\| \bigr).
\]

It remains to show that $\E[Y^{d+\alpha}]<\infty$ is also necessary.
That $\E[Y^d]<\infty$ is necessary is a consequence of
Proposition~\ref{propYd}, so there is no restriction assuming that
$\E[Y^d]$ is finite. In order to obtain a lower bound on $\sup\Zdiv
_\varepsilon$, we are, in contrast to the lower bound in~(\ref
{eqZbound}), looking for the largest integer $n$ such that there
exists $z\in\Z^d$ for which $\|z\|=n$ and $Y(z)>(\bar{\muZ
}+\varepsilon)\|z\|$. Since $Y(z)$'s are independent for points at
$\ell^1$-distance at least 2, we restrict focus further to even values
of $n$.

For $\beta>0$, let
\[
\eta_\beta:= \bigl\{n\in2\N\dvtx \exists z\in\Z^d\mbox{ for
which }\| z\|=n\mbox{ and }Y(z)>\beta n \bigr\}.
\]
For $\beta=\beta(\varepsilon)$ sufficiently large ($\beta\ge
\bar{\muZ}+\varepsilon$ will do), we have the lower bound
%
\begin{eqnarray}\label{eqsupZbound}
\E\bigl[(\sup\Zdiv_\varepsilon
)^\alpha\bigr] &\ge&\sum_{n\in2\N
}n^\alpha
\Pr(\sup\eta_\beta=n)
\nonumber\\[-8pt]\\[-8pt]
&=& \sum_{n\in2\N}n^\alpha\Pr\Bigl(\max
_{\|z\|=n}Y(z)>\beta n \Bigr)\Pr(\sup\eta_\beta\le n),\nonumber
\end{eqnarray}
where the equality follows by independence.

It is well known that the probability of a binomially distributed
random variable being strictly positive is comparable to its mean. Let
$X$ be binomially distributed with parameters $n$ and $p$. The union
bound shows that its mean $np$ is an upper bound on $\Pr(X>0)$, but an
application of Cauchy--Schwarz's inequality gives as well the lower
bound $\E[X]^2/\E[X^2]\ge np/(1+np)$, which if $np$ is small compared
to 1, is at least $np/2$.

Fix $\beta=\beta(\varepsilon)$ such that~(\ref{eqsupZbound})
holds. Let $X_n$ denote the number of $z$ for which $\|z\|=n$ and
$Y(z)>\beta n$. Since $Y(z)$'s are independent for different points at
the same $\ell^1$-distance, $X_n$ is binomial. The number of points at
distance $n$ from the origin are of order $n^{d-1}$, and (since $\E
[Y^d]<\infty$ is assumed) $\Pr(Y>\beta n)$ decays at least as
$n^{-d}$ via Markov's inequality. Consequently, for $n$ large $\E
[X_n]\le1$, and there is $\delta>0$ such that
%
\begin{equation}
\label{eqbintrick} \Pr\Bigl(\max_{\|z\|=n}Y(z)>\beta n \Bigr) \ge
\E[X_n]/2 \ge\delta n^{d-1} \Pr(Y>\beta n).
\end{equation}
We next claim that $\Pr(\sup\eta_\beta\le n)\ge1/2$ for large $n$.
Via the lower bound in~(\ref{eqsupZbound}), we conclude that for some
$\delta>0$ and $N<\infty$
\[
\E\bigl[(\sup\Zdiv_\varepsilon)^\alpha\bigr] \ge\frac{\delta}{2}
\sum_{n\in2\N\dvtx  n\ge N}n^{d+\alpha-1} \Pr(Y>\beta n),
\]
for which $\E[Y^{d+\alpha}]<\infty$ is necessary in order to be finite.

It remains to show that $\Pr(\sup\eta_\beta\le n)\ge\frac{1}{2}$
for large enough $n$. Note that
\[
\Pr(\sup\eta_\beta>n) \le\sum_{k>n}\Pr
\Bigl(\max_{\|z\|
=k}Y(z)>\beta k \Bigr) \le C\sum
_{k>n}k^{d-1} \Pr(Y>\beta k),
\]
for some $C<\infty$. Since $\E[Y^d]$ is assumed finite, the
right-hand side is\break summable, and becomes arbitrarily small as $n$
increases. This proves the claim.
\end{pf}

What remains is to prove Theorem~\ref{teoST}. The proof will be
similar to that of Proposition~\ref{propsupZ}, but will require a
couple of additional estimates. The difference is a consequence of the
difference in the upper and lower bounds between~(\ref{eqZbound})
and \mbox{(\ref{eqTupperA})--(\ref{eqTlower})}. As such, we will encounter
moments of products on the form $X\cdot\ind_{\{X>a\}}$. For every
$\alpha>0$, $a\ge0$ and random variable $X$, we have the following formula:
%
\begin{equation}
\label{eqexprestriction} \E\bigl[X^\alpha\cdot\ind_{\{X>a\}}\bigr] =
a^\alpha\Pr(X>a)+\alpha\int_a^\infty
x^{\alpha-1} \Pr(X>x) \,dx.
\end{equation}
Since it will be used more than once, we separate the following bound
on a double summation as a lemma.

\begin{lma}\label{lmadoublesum}
For every $\alpha\ge0$ and $\beta\ge0$, there are $c=c(\alpha,\beta)$
and $C=C(\alpha,\beta)$ such that
\begin{eqnarray*} c\sum_{n=1}^\infty(n-1)^{\alpha+\beta+1}
\Pr(X>n) &\le&\sum_{n=1}^\infty
n^\alpha\int_n^\infty x^\beta
\Pr(X>x) \,dx
\\
&\le& C\sum_{n=1}^\infty(n+1)^{\alpha+\beta+1}
\Pr(X>n).
\end{eqnarray*}
\end{lma}

\begin{pf}
Split the integration domain into unit intervals and bound the
integrand from below and above. The lower and upper bonds follow via
the estimate
\[
(m/2)^{\alpha+1} \le\sum_{n=1}^mn^\alpha
\le m^{\alpha+1}.
\]\upqed
\end{pf}

\begin{pf*}{Proof of Theorem~\ref{teoST}}
Fix $\alpha>0$ and $\varepsilon>0$. We will prove the implications,
one by one, between the three expressions: (a) $\E[Y^{d+\alpha
}]<\infty$; (b) $\E[(\sup\Tdiv_\varepsilon)^\alpha
]<\infty$; and (c) $\E[|\Tdiv_\varepsilon|^\alpha]<\infty
$, starting with the following:\vspace*{6pt}

(a)${}\Rightarrow{}$(b)
Since $\sup\Tdiv_\varepsilon=\sup\{\max I_A(z)\cup I_B(z)\dvtx z\in\Z
^d\}$,
we obtain in similarity to~(\ref{eqTupperA}) and~(\ref{eqTupperB})
the upper bound
%
\begin{eqnarray*} \E\bigl[(\sup\Tdiv_\varepsilon)^\alpha
\bigr] &\le&\sum_{z\in
\Z^d}\E\bigl[ \bigl(\max
I_A(z)\cup I_B(z) \bigr)^\alpha\bigr]
\\
&\le&\sum_{z\in\Z^d}\E\bigl[T(0,z)^\alpha
\ind_{\{T(0,z)-\muZ
(z)>\beta\|z\|\}} \bigr]
\\
&&{} +\sum_{z\in\Z^d}\muZ(z)^\alpha\Pr
\bigl(T(0,z)-\muZ(z)<-\beta\|z\| \bigr),
\end{eqnarray*}
valid for all sufficiently small $\beta=\beta(\varepsilon)>0$. The
latter sum in the right-hand side is finite as of Theorem~\ref
{teoHRE}. The former sum takes via~(\ref{eqexprestriction}) the form
\begin{eqnarray*}
&& \sum_{z\in\Z^d}\big(\muZ(z)+\beta\|z\|\big)^\alpha\,\Pr\big(T(0,z)-\muZ(z)>\beta\|z\|\big)
\\
&&\quad{} +\sum_{z\in\Z^d}\alpha\int_{\muZ(z)+\beta\|z\|}^\infty x^{\alpha-1}\,\Pr\big(T(0,z)>x\big)\,dx.
\end{eqnarray*}
The former of these two sums is again finite according to Theorem~\ref
{teoHRE}. Once the latter sum is broken up into two, one over $n\in\N
$ and the other over $\|z\|=n$, and the integral has gone through a
chance of variables $x\mapsto x-\muZ(z)$, then Theorem~\ref
{teoLDabove} can be used to relate the probability tail of
$T(0,z)-\muZ(z)$ with that of $Y$. Since the number of points at
distance $n$ from the origin is of order $n^{d-1}$, an upper bound is
given by
\[
\alpha CM\sum_{n=1}^\infty n^{d-1}
\int_{\beta n}^\infty x^{\alpha
-1} \biggl(\Pr(Y>x/M)+
\frac{1}{x^{d+\alpha+1}} \biggr) \,dx,
\]
for some finite constants $C$ and $M=M(\alpha,\beta,d)$.
Integrating over the two terms separately breaks the sum in two, of
which the latter is easily seen to be finite. The former can instead be
estimated via Lemma~\ref{lmadoublesum}. An upper bound on this part
is obtained as
\[
M'\sum_{n=1}^\infty(n+1)^{d+\alpha-1}
\Pr(Y>\beta n/M),\vadjust{\goodbreak}
\]
for some constant $M'=M'(\alpha,\beta,d)$, which is finite when $\E
[Y^{d+\alpha}]<\infty$.

(b)${}\Rightarrow{}$(c) This step is trivial.

(c)${}\Rightarrow{}$(a)
A lower bound on $|\Tdiv_\varepsilon|$ is given by the contribution
of a single site $z$. However, looking at a particular site is not
going to give a bound of the right order. Instead we will pick a site
randomly, and more precisely, the site furthest from the origin among
those contributing to $\Tdiv_\varepsilon$. In case this site is not
unique, then we pick the one contributing more. The contribution of
each site $z$ was in~(\ref{eqTlower}) seen to be at least
$(Y(z)-\beta\|z\|)\ind_{\{Y(z)>\beta\|z\|\}}$, for every
sufficiently large $\beta=\beta(\varepsilon)$. Similar to that
of~(\ref{eqsupZbound}), we have
\[
\E\bigl[|\Tdiv_\varepsilon|^\alpha\bigr] \ge\sum
_{n\in2\N}\E\Bigl[ \Bigl(\max_{\|z\|=n}Y(z)-\beta
n \Bigr)^\alpha\ind_{\{\max_{\|z\|
=n}Y(z)>\beta n\}} \Bigr]\Pr(\sup\eta_\beta\le
n).
\]
Here, like in the proof of Proposition~\ref{propsupZ}, we sum over
even integers for the sake of independence between the events $\{\sup
\eta_\beta=n\}$ and $\{\sup\eta_\beta\le n\}$. Combining the
identity~(\ref{eqexprestriction}) and the bound~(\ref{eqbintrick})
with a change of variables, we arrive at
\[
\E\bigl[|\Tdiv_\varepsilon|^\alpha\bigr] \ge\alpha\delta\sum
_{n\in2\N}n^{d-1} \Pr(\sup\eta_\beta\le
n)\int_{\beta n}^\infty(x-\beta n)^{\alpha-1} \Pr
\bigl(Y(z)>x\bigr) \,dx.
\]
Again, since $\Pr(\sup\eta_\beta\le n)\ge1/2$ for $n$ large
enough, we may via the double summation estimate in Lemma~\ref
{lmadoublesum} conclude that $\E[Y^{d+\alpha}]<\infty$ is necessary
for $\E[|\Tdiv_\varepsilon|^\alpha]$ to be finite.
\end{pf*}

\begin{appendix}
\section{Convergence toward the time constant}\label{appmu}

The time constant was in~(\ref{deftimeconstant}) defined for $z\in\Z
^d$ as the limit in probability of $\frac{1}{n}T(0,nz)$ as $n\to
\infty$. Existence of the limit (almost surely and in $L^1$) under the
assumption $\E[Y]<\infty$ follows from a straightforward application
of the subadditive ergodic theorem \cite{kingman68}. Existence of the
limit (in probability) without a moment condition was later derived
in~\cite{coxdur81,kesten86}. There is a unique extension of $\muZ$ to
all of $\R^d$ that retains the properties of a semi-norm. For
example, we may define $\muZ(x)$ for $x\in\R^d$ via the limit
\[
\muZ(x):=\lim_{n\to\infty}\frac{\muZ(v_n)}{n},
\]
where $v_1,v_2,\ldots$ is any sequence in $\Z^d$ such that $v_n/n\to
x$ as $n\to\infty$.

Existence of this limit is well known and follows from the properties
of $\mu$ as a semi-norm on $\Z^d$. That these properties are
preserved in the limit is similarly verified. We would here like to
emphasize, perhaps, a less-known fact, which is easily seen to follow
the results reported in this paper. Namely, the necessary and
sufficient condition under which $\muZ(x)$, for $x\in\R^d$, appears
as the almost sure limit for some sequence of travel times. We are not
aware of such a condition previously appearing in the literature.

\begin{prop}
Fix $x\in\R^d$ and let $v_1,v_2,\ldots$ be any sequence of points in
$\Z^d$ such that $v_n/n\to x$ as $n\to\infty$. Then
\[
\lim_{n\to\infty}\frac{T(0,v_n)}{n}=\muZ(x)\qquad\mbox{in
probability}.
\]
Moreover, the limit holds almost surely, in $L^1$ and completely if and
only if \mbox{$\E[Y]<\infty$.}
\end{prop}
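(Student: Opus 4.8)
The plan is to split the statement into three parts: (i) convergence in probability, always; (ii) the implication $\E[Y]<\infty \Rightarrow$ almost sure, $L^1$ and complete convergence; and (iii) the converse, that any of these stronger modes forces $\E[Y]<\infty$. Throughout, write $z^{(n)}/n\to x$, so in particular $\|z^{(n)}\|/n\to\|x\|$ (or, if $x=\org$, $\|z^{(n)}\|=o(n)$). Note that the case $\muZ\equiv 0$, i.e.\ $\Pr(\tau_e=0)\ge p_c(d)$, must be handled too, but there $\muZ(x)=0$ for all $x$ and the claim $T(0,z^{(n)})/n\to 0$ in the relevant modes reduces to the subadditive-ergodic statement about $T(\Delta_0,\Delta_{z^{(n)}})$ together with~\eqref{eq:shellcompare}; I would dispatch this at the outset and then assume $\muZ\not\equiv0$ for the rest.

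For part (i), I would first reduce to the radial case. Since $\muZ$ is Lipschitz (third displayed property of $\muZ$ in Section~\ref{sec:prel}), $\muZ(z^{(n)})/n\to\muZ(x)$, so it suffices to show $\big(T(0,z^{(n)})-\muZ(z^{(n)})\big)/n\to 0$ in probability. For any fixed $\delta>0$, when $x\neq\org$ we have $\|z^{(n)}\|\asymp n$, so $\eps\|z^{(n)}\|\le\delta n$ for small $\eps$ and large $n$; then $\Pr(|T(0,z^{(n)})-\muZ(z^{(n)})|>\delta n)\le\Pr(|T(0,z^{(n)})-\muZ(z^{(n)})|>\eps\|z^{(n)}\|)\to 0$ by the weak Shape Theorem~\eqref{eq:weakST}. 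When $x=\org$, one instead bounds $T(0,z^{(n)})\le T(0,\Delta_{z^{(n)}})+2d\tlarge|\Delta_{z^{(n)}}|$ and uses~\eqref{eq:STkesten} together with $\|z^{(n)}\|=o(n)$. This gives convergence in probability with no moment hypothesis.

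For part (ii), assume $\E[Y]<\infty$. Complete convergence is the strongest of the three (it implies a.s.\ convergence by the Borel--Cantelli argument recalled in the Remark after Theorem~\ref{thm:HRE}), so I would establish complete convergence and $L^1$ convergence. For complete convergence, fix $\delta>0$; I must show $\sum_n \Pr(|T(0,z^{(n)})-\muZ(z^{(n)})|>\delta n)<\infty$. Split into the below and above parts as in the proof of Theorem~\ref{thm:HRE}. The below part decays exponentially in $\|z^{(n)}\|+\text{(deviation)}$ via Theorem~\ref{thm:LDbelow} with $x=\max(\|z^{(n)}\|,\delta n)\ge \delta n$, hence the terms are summable. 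For the above part, apply Theorem~\ref{thm:LDabove} with $\alpha=1$ (legitimate since $\E[Y]<\infty$) and, say, $q=2$, and with $x=\max(\|z^{(n)}\|,\delta n)$: the bound is $M\Pr(Y>\delta n/M)+M/(\delta n)^2$, and $\sum_n\Pr(Y>\delta n/M)<\infty$ precisely because $\E[Y]<\infty$, while $\sum_n n^{-2}<\infty$. (When $x=\org$ one uses $x=\delta n$ directly; when $x\neq\org$, $\|z^{(n)}\|\le 2\|x\|n$ eventually so $x$ is of order $n$.) For $L^1$: $T(0,z^{(n)})/n\to\muZ(x)$ in probability plus uniform integrability of $\{T(0,z^{(n)})/n\}$ gives $L^1$ convergence; uniform integrability follows from Corollary~\ref{cor:Lp} with, e.g., $p=1$ after noting $\E|T(0,z^{(n)})/n - \muZ(z^{(n)})/n|$ is controlled by $\|z^{(n)}\|/n$ times the quantity in Corollary~\ref{cor:Lp}, hence bounded; one then upgrades to $L^p$-boundedness for some $p>1$ by applying Corollary~\ref{cor:Lp} with such $p$ under the stronger-sounding but here available observation that $\E[Y]<\infty$ alone does not give $\E[Y^p]<\infty$ — so instead I would argue uniform integrability directly from the tail bounds in Theorems~\ref{thm:LDbelow}--\ref{thm:LDabove}: $\Pr(T(0,z^{(n)})/n>\lambda)\le M\Pr(Y>c n\lambda)+M(n\lambda)^{-2}$ for $\lambda$ large, and $\sup_n n\,\Pr(Y>cn\lambda)\to 0$ as $\lambda\to\infty$ since $\E[Y]<\infty$ implies $t\Pr(Y>t)\to 0$. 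This is the delicate point, and it is where I expect to spend the most care.

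For part (iii), the converse, I would show that if $\E[Y]=\infty$ then $T(0,z^{(n)})/n$ fails to converge a.s.\ (and hence also fails in $L^1$ and completely). By Borel--Cantelli applied to the independent-ish events $\{Y(z^{(n)})>\beta n\}$ — here one must first pass to a subsequence along which the points $z^{(n)}$ are pairwise at $\ell^1$-distance $\ge 2$, which is possible since $\|z^{(n)}\|\to\infty$ when $x\neq\org$ (and the $x=\org$ statement has $\muZ(x)=0$ with $\E[Y]=\infty$, handled by the same device since $\|z^{(n)}\|\to\infty$ is not guaranteed, so there one argues instead that $Y(0)$ alone, or a single fixed neighborhood, gives $\limsup T(0,z^{(n)})/n$ comparison — actually when $x=\org$ and $z^{(n)}$ stays bounded the claim is just about i.i.d.\ behavior and $\E[Y]=\infty$ makes $T(0,z^{(n)})$ itself have infinite mean along a constant subsequence, contradicting $L^1$; for the a.s.\ and complete parts one uses that $z^{(n)}$ must take infinitely many distinct values or be eventually constant, and in the latter case $T(0,z^{(n)})$ is a fixed random variable so convergence is automatic and the statement is about $\E[Y]<\infty\Leftrightarrow$ nothing — I would simply note the degenerate subcase). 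Along the chosen subsequence, $\E[Y]=\infty$ gives $\sum\Pr(Y(z^{(n)})>\beta n)=\infty$ for every $\beta$, so $Y(z^{(n)})>\beta n$ infinitely often a.s.; since $T(0,z^{(n)})\ge Y(z^{(n)})$, we get $\limsup_n T(0,z^{(n)})/n\ge\beta$ a.s.\ for every $\beta$, so $T(0,z^{(n)})/n\to\infty$ along a random subsequence, contradicting convergence to the finite value $\muZ(x)$. This rules out a.s.\ convergence; $L^1$ convergence would imply a subsequence converging a.s., same contradiction; complete convergence implies a.s.\ convergence, same contradiction. The main obstacle in this part is the bookkeeping around subsequences and the degenerate case $x=\org$ with a bounded integer sequence; the probabilistic heart is the elementary Borel--Cantelli argument already used in Proposition~\ref{prop:Yd}.

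In summary, the only genuinely substantive step is the uniform-integrability argument needed for $L^1$ convergence under the bare hypothesis $\E[Y]<\infty$ (which does not supply $L^p$-boundedness for $p>1$), and it is carried out by feeding $x\asymp n$ into Theorems~\ref{thm:LDbelow} and~\ref{thm:LDabove} and using $t\Pr(Y>t)\to0$; everything else is a direct assembly of the already-proved Theorems~\ref{thm:LDbelow}, \ref{thm:LDabove}, \eqref{eq:weakST}, \eqref{eq:STkesten}, Corollary~\ref{cor:Lp}, and a Borel--Cantelli argument in the style of Proposition~\ref{prop:Yd}.
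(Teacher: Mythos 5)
Your proposal is correct and, for three of the four components, coincides with the paper's own argument: convergence in probability via the Lipschitz property of $\muZ$ and~\eqref{eq:weakST}; complete (hence almost sure) convergence by summing the bounds of Theorems~\ref{thm:LDbelow} and~\ref{thm:LDabove} with $\alpha=1$ and $q=2$; and necessity of $\E[Y]<\infty$ via the edge-minimum lower bound and Borel--Cantelli. The one place you genuinely diverge is the $L^1$ step: after some hesitation you construct uniform integrability by hand from the tail bounds (using $t\Pr(Y>t)\to0$ and integrating the tails), whereas the paper simply invokes Corollary~\ref{cor:Lp} with $p=1$: since $\|z^{(n)}\|/n$ is bounded, $\E\bigl|T(0,z^{(n)})-\muZ(z^{(n)})\bigr|/n\to0$ follows at once, and no uniform-integrability argument is needed. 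Your route is valid but longer; the observation you missed is that Corollary~\ref{cor:Lp} at $p=1$ already gives $L^1$ convergence of the normalized difference directly, so the concern about $L^p$-boundedness for $p>1$ is moot. Your extra care with the case $x=\org$ is not misplaced --- the paper's sufficiency argument implicitly assumes $\|z^{(n)}\|/n$ bounded away from zero --- and your remark that an eventually constant sequence $z^{(n)}$ makes almost sure convergence automatic (while $L^1$ and complete convergence still force $\E[Y]<\infty$, since $T(0,z_0)\ge Y(0)$) correctly identifies that the equivalence should be read as a statement about the three modes jointly.
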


\begin{pf}
A first observation due to the triangle inequality is that
\[
\bigl|T(0,v_n)-n\muZ(x)\bigr| \le\bigl|T(0,v_n)-
\muZ(v_n)\bigr|+\bigl|\muZ(v_n)-n\muZ(x)\bigr|.
\]
Let $\varepsilon>0$. By the properties of $\muZ$ as a norm, the
latter term in the right-hand side is bounded above by $\muZ(\ebf_1)\|
v_n-nx\|$, which is at most $\varepsilon n$ when $n$ is large. It
follows that
\[
\limsup_{n\to\infty}\Pr\bigl(\bigl|T(0,v_n)-n\muZ(x)\bigr|>2
\varepsilon n \bigr) \le\limsup_{n\to\infty}\Pr\bigl(\bigl|T(0,v_n)-
\muZ(v_n)\bigr|>\varepsilon n \bigr),
\]
which by~(\ref{eqweakST}) has to equal zero. This proves convergence
in probability.

Necessity of $\E[Y]<\infty$ for almost sure and $L^1$-convergence
follows as before, due to the fact that a lower bound on the travel
time from the origin to any other point is bounded from below by the
minimum of the $2d$ weights associated with the edges adjacent to the origin.
To conclude that $\E[Y]<\infty$ is sufficient for the convergence to
hold almost surely it suffices to note that the sequence $(nz)_{n\ge
1}$ in Corollary~\ref{corLDsum} can be exchanged for any sequence
$(v_n)_{n\ge1}$ for which $\|v_n\|/n$ is bounded away from $0$ and
$\infty$. In particular, by Theorems~\ref{teoLDbelow} and~\ref
{teoLDabove} (with $\alpha=1$ and $q=2$) there exists $M$ (depending
on $\varepsilon>0$ and the upper bound on $\|v_n\|/n$) such that
\[
\sum_{n=1}^\infty\Pr\bigl(\bigl|T(0,v_n)-
\muZ(v_n)\bigr|>\varepsilon n \bigr) \le M \sum
_{n=1}^\infty\biggl(\Pr(Y>n/M)+\frac{1}{n^2}
\biggr),
\]
which is finite since $\E[Y]<\infty$. This proves almost sure and
complete convergence.
Finally, $L^1$-convergence is due to Corollary~\ref{corLp}, again
since $\|v_n\|/n$ is assumed bounded.
\end{pf}

\section{Kesten's construction of shells}\label{appshell}

For completeness, let us present a precise construction of the shells
in Section~\ref{secshell}. The construction will follow closely that
of Kesten~\cite{kesten86}, Section~2.

Given $\delta>0$, pick $\tbar=\tbar(\delta)$ such that $\Pr(\tau
_e\le\tbar)\ge1-\delta$. As before, color each vertex in $\Z^d$
either black or white; black if at least one of the edges adjacent to
it has weight larger than $\tbar$, and white otherwise. We shall below
introduce a notion of black and white clusters, for which we will need
to distinguish paths from \mbox{$\star$-}paths. A \emph{path} will refer to
a sequence of vertices $v_0,v_1,\ldots,v_n$ of the $\Z^d$ lattice
such that two consecutive points are at $\ell^1$-distance one. A \emph
{$\star$-path} will similarly refer to a nearest-neighbor sequence of
vertices with respect to $\ell^\infty$-distance on $\Z^d$. A~path or
a $\star$-path will be called \emph{black} or \emph{white} if all
its points are black or white, respectively.

Given $A\subset\Z^d$, define the black and white clusters of $A$ as
\begin{eqnarray*} C(A,b)&:=&A\cup\bigl\{z\in\Z^d\dvtx z
\leftrightarrow y\mbox{ by a black $\star$-path},
\\
&&\phantom{A\cup\bigl\{}\mbox{for some $y$ at $\ell^\infty$-distance 1 from }A\bigr\},
\\
C(A,w)&:=&A\cup\bigl\{z\in\Z^d\dvtx z\leftrightarrow y\mbox{ by a white
path},
\\
&&\phantom{A\cup\bigl\{} \mbox{for some $y$ at $\ell^1$-distance 1 from }A\bigr\}.
\end{eqnarray*}
The exterior boundary $\partial_{\mathrm{ext}}C$ of a set $C\subset
\Z^d$ is defined as the set of points $z\in\Z^d\setminus C$ for
which there is a point $y\in C$ at $\ell^\infty$-distance 1 from $z$,
and for which there is a path connecting $z$ to infinity without
intersecting $C$. Next, let $D_n(z)$ denote the box of side-length
$2n+1$ centered at $z$, and let
\[
n(z):=\min\bigl\{n\ge0\dvtx \bigl|C(y,w)\bigr|=\infty\mbox{ for some }y\in D_n(z)
\bigr\}.
\]

Let $S_z:=\partial_{\mathrm{ext}}C(D_{n(z)}(z),b)$. By construction,
all vertices in $S_z$ are white, and $S_z$ must contain a white vertex
belonging to an infinite white cluster. Kesten continues with two lemmas.

\begin{lma}[(\cite{kesten86}, Lemma~2.23)]\label{lmakesten1}
The exterior boundary of any finite \mbox{$\star$-}connec\-ted set is
connected. In particular, if $C(D_{n(z)}(z),b)$ is finite, then $S_z$
is connected. Moreover, in that case $S_z$ separates $z$ from infinity
in the sense that every path from $z$ to infinity has to intersect $S_z$.
\end{lma}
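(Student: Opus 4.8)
The statement contains a general topological fact, namely that the exterior boundary $\partial_\text{ext}C$ of \emph{any} finite $\star$-connected set $C\subseteq\Z^d$ is connected in the $\ell^1$-graph on $\Z^d$, applied to $C=C(D_{n(z)}(z),b)$ (which is indeed $\star$-connected, being a box with a black $\star$-cluster attached), together with the separation property that every path from $z$ to infinity meets $S_z$. I would settle the separation property first, since it is elementary: $C:=C(D_{n(z)}(z),b)$ contains $z$ and is finite, so a path $\gamma$ from $z$ to infinity has a last vertex $y\in C$; its successor $y'$ on $\gamma$ satisfies $y'\notin C$, $\|y'-y\|_\infty=1$ with $y\in C$, and the portion of $\gamma$ beyond $y'$ avoids $C$ and runs off to infinity, so $y'\in\partial_\text{ext}C=S_z$ by definition and $\gamma$ meets $S_z$.

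For the connectedness of $\partial_\text{ext}C$ --- the real content --- my plan is to pass to a continuum model. Realize $C$ as the compact set $P:=\bigcup_{x\in C}\bigl(x+[-\tfrac12,\tfrac12]^d\bigr)\subseteq\R^d$. Since two $\star$-adjacent unit cubes share at least a corner, the $\star$-connectedness of $C$ makes $P$ connected; hence $\R^d\setminus P$ has a single unbounded component $U$, and $\R^d\setminus U$ --- that is, $P$ with its bounded complementary components filled in --- is compact and connected, while $\overline U$ is connected. Adjoining the point at infinity and using that $S^d$ is unicoherent for $d\ge2$ (for $d=1$ the lemma itself fails, e.g.\ $\partial_\text{ext}\{0\}=\{-1,1\}$ in $\Z$), one concludes that the frontier $\partial U=\overline U\cap(\R^d\setminus U)$ is connected. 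It then remains to transfer this to the lattice: via the standard dictionary between the $\ell^1$-components of $\Z^d\setminus C$ and the components of $\R^d\setminus P$ one relates $\partial_\text{ext}C$ to the lattice points abutting $\partial U$, and checks that $\ell^1$-adjacency of two such vertices corresponds to their cubes meeting along $\overline U$, so that connectedness of $\partial U$ descends to $\ell^1$-connectedness of $\partial_\text{ext}C$.

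The main obstacle is exactly this last transfer: it is complicated by the mismatch between $\star$-adjacency, which governs membership in $\partial_\text{ext}$, and $\ell^1$-adjacency, which governs its connectivity and the component structure of $\Z^d\setminus C$, so one must keep careful track of which cells and lattice points near the interface sit on the unbounded side. A more combinatorial alternative is induction on $|C|$: the base case $|C|=1$ amounts to checking that the punctured cube $\{-1,0,1\}^d\setminus\{0\}$ spans a connected $\ell^1$-subgraph (toggle one coordinate at a time towards $\pm1$, keeping the running vertex off the origin, possible since $d\ge2$); for the inductive step, delete from $C$ a vertex $v$ whose removal preserves $\star$-connectedness, note that if $v$ is not $\star$-adjacent to the unbounded component of $\Z^d\setminus C$ then $\partial_\text{ext}$ is unaffected, and treat the remaining case --- $v$ exposed to the unbounded component --- by showing that restoring $v$ changes $\partial_\text{ext}$ only inside a bounded neighbourhood of $v$ and in a connectivity-preserving way. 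The bookkeeping in that exposed case looks at least as delicate as the transfer step, so I would pursue the continuum route.
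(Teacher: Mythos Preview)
The paper does not supply its own proof of this lemma: it is simply quoted from Kesten's Saint-Flour notes \cite[Lemma~2.23]{kesten86} and used as a black box. So there is no argument in the paper to compare against; your proposal is an attempt to fill in what the paper deliberately omits.

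Your separation argument is correct and complete. For the connectedness of $\partial_{\text{ext}}C$, the continuum/unicoherence strategy you outline is sound in spirit and is indeed how such statements are typically proved (Kesten's own argument appeals to lattice--topological results of this kind). One minor simplification: you need not pass to $S^d$, since $\R^d$ itself is unicoherent (being simply connected), and the decomposition $\R^d=\overline{U}\cup(\R^d\setminus U)$ already yields $\partial U$ connected once you check both pieces are closed and connected --- which you essentially do.

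The genuine gap is the one you flag yourself: the transfer from ``$\partial U$ is connected in $\R^d$'' to ``$\partial_{\text{ext}}C$ is $\ell^1$-connected in $\Z^d$'' is not carried out, and it is not automatic. The difficulty is exactly the $\star$/$\ell^1$ mismatch: a vertex of $\partial_{\text{ext}}C$ may have its cube touching $P$ only along a lower-dimensional face, and a continuous path in $\partial U$ may slide through such pinch points in ways that do not obviously discretize to an $\ell^1$-path staying in $\partial_{\text{ext}}C$. Making this precise requires a careful local analysis near each cell of $\partial U$, or else an appeal to a known result (e.g.\ the boundary-connectivity arguments in Kesten's Appendix, Deuschel--Pisztora, or Tim\'ar's graph-theoretic proof). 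As it stands, your proposal is a reasonable plan but not a proof; since the paper itself is content to cite \cite{kesten86}, that citation (or one of the above) is the expected treatment here.
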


\begin{lma}[(\cite{kesten86}, Lemma~2.24)]
If $\delta>0$ is sufficiently small, then the set $C(D_n(z),b)$ is
almost surely finite for every $n\ge0$. Moreover, there are constants
$M<\infty$ and $\gamma>0$ such that for every $k\ge0$
\[
\Pr\bigl(n(z)>k \bigr) \le Me^{-\gamma k}\quad\mbox{and}\quad\Pr\bigl(
\diam(S_z)>k \bigr) \le Me^{-\gamma k}.
\]
\end{lma}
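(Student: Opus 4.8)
The plan is to reduce everything to two percolation estimates: an exponential bound on the diameter of black $\star$-clusters (subcriticality), and an exponential bound on the probability that a box of side $k$ fails to meet an infinite white cluster (supercriticality), both valid once $\delta$ is small. The starting point is that $\Pr(v\text{ is black})\le 2d\,\Pr(\tau_e>\tbar)\le 2d\delta$, and that the events $\{v\text{ is black}\}$ over any family of vertices lying pairwise at $\ell^1$-distance at least $2$ are \emph{mutually independent}, since the corresponding sets of incident edges are then pairwise --- an edge having only two endpoints, hence mutually --- disjoint. A greedy packing argument shows that any $\star$-connected set of $m$ vertices contains such a family of size at least $m/(2d+1)$; combined with the classical bound $c_0^m$, for some $c_0=c_0(d)<\infty$, on the number of $\star$-connected $m$-vertex subsets of $\Z^d$ through a fixed vertex, this gives $\Pr(\Sigma\text{ entirely black})\le(2d\delta)^{m/(2d+1)}$ for every such $\Sigma$, and therefore $\Pr(\diam C^\star(v)>m)\le Me^{-\gamma m}$ for the black $\star$-cluster $C^\star(v)$ of every fixed $v$, provided $\delta$ is small enough that $c_0(2d\delta)^{1/(2d+1)}<1$. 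In particular each $C^\star(v)$ is finite a.s. Since $C(D_n(z),b)$ is $D_n(z)$ together with the black $\star$-clusters of the finitely many vertices at $\ell^\infty$-distance $1$ from $D_n(z)$, it is finite a.s.

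For the tail of $n(z)$ I would use that $\{n(z)>k\}$ is precisely the event that no white vertex of $D_k(z)$ lies in an infinite white cluster, i.e.\ that $C(D_k(z),w)$ is finite. A Peierls/duality argument then produces a $\star$-connected ``blocking surface'' enclosing $D_k(z)$ whose vertices are, apart from $O(k^{d-1})$ vertices inside $D_{k+1}(z)$ adjacent to black vertices of $D_k(z)$, black; any such surface has at least $c_d k^{d-1}\ge c_d k$ vertices, so either it contains $\gtrsim k^{d-1}$ black vertices in a near-$\star$-connected configuration or $D_k(z)$ itself contains $\gtrsim k^{d-1}$ black vertices, and in either case the counting of the previous paragraph yields $\Pr(n(z)>k)\le Me^{-\gamma k}$ once $\delta$ is small. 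Equivalently, this is the standard fact that in sufficiently supercritical percolation every box of side $k$ meets the unique infinite cluster outside an event of probability at most $Me^{-\gamma k}$; see \cite{kesten86}.

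The bound for $\diam(S_z)$ then follows by combination. Since each vertex of $S_z=\partial_\text{ext}C(D_{n(z)}(z),b)$ lies at $\ell^\infty$-distance $1$ from $C(D_{n(z)}(z),b)$, and the latter is $D_{n(z)}(z)$ together with black $\star$-clusters of vertices within $\ell^\infty$-distance $1$ of it, there is a dimensional constant $C$ with $\diam(S_z)\le C\,n(z)+C\max\{\diam C^\star(y):\|y-z\|_\infty\le n(z)+1\}+C$. Hence, for $k$ large, $\{\diam(S_z)>k\}\subseteq\{n(z)>k/(3C)\}\cup\big(\{n(z)\le k/(3C)\}\cap\bigcup_{\|y-z\|_\infty\le k/(3C)+1}\{\diam C^\star(y)>k/(3C)\}\big)$. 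The first event has probability $\le Me^{-\gamma k/(3C)}$ by the previous paragraph; the union ranges over at most $c_d k^d$ vertices $y$, each contributing $\le Me^{-\gamma k/(3C)}$ by the first paragraph, so the whole is at most $c_d k^d Me^{-\gamma k/(3C)}$. A polynomial times an exponential is again exponential, so adjusting $M$ and $\gamma$ gives $\Pr(\diam(S_z)>k)\le Me^{-\gamma k}$.

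The one step left vague --- and the genuine obstacle --- is the duality in the bound for $n(z)$: making precise, in all dimensions $d\ge2$, that a box not joined to infinity by a white path is enclosed by a $\star$-connected black surface of size $\gtrsim k^{d-1}$. For $d=2$ this is classical planar circuit duality; in higher dimensions one needs the $(d-1)$-dimensional surface duality (of the kind behind the cited Lemma~2.23, or Tim\'ar's boundary-connectivity theorem), together with the bookkeeping to check that the few surface vertices that are white rather than black are confined to $D_{k+1}(z)$ and still force enough nearby black vertices. By contrast, the statistical dependence among the colourings is harmless: the $\ell^1$-distance-$2$ independence and the packing argument reduce every probability estimate to an i.i.d.\ geometric-series computation. (Alternatively, one may invoke the Liggett--Schonmann--Stacey domination theorem to replace the colour field outright by a Bernoulli field of small, respectively large, density, and then quote standard sub- and supercritical percolation estimates.)
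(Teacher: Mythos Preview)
The paper does not give its own proof of this lemma: it is simply quoted from \cite{kesten86} (as Lemma~2.24 there), so there is nothing in the present paper to compare your argument against. Your sketch is a faithful outline of the standard route --- subcritical Peierls counting for black $\star$-clusters, a supercritical ``box meets the infinite cluster'' estimate for $n(z)$, and a union bound to combine them for $\diam(S_z)$ --- and the places you flag as delicate (the $1$-dependence of the colouring and the higher-dimensional blocking-surface duality) are exactly the points where Kesten's original proof does the real work.

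One caution on the step you call the genuine obstacle: your formulation of the blocking surface is slightly off. On the event $\{n(z)>k\}$ it is not that a \emph{black} $\star$-surface surrounds $D_k(z)$ with a few white exceptions near the boundary; rather, the white cluster $C(D_k(z),w)$ is finite, and it is its exterior boundary (a $\star$-connected set by the companion Lemma~2.23) that consists of black vertices and has diameter at least $2k+1$. That object is genuinely all black, so no bookkeeping of ``white exceptions confined to $D_{k+1}(z)$'' is needed; once you have it, your first-paragraph counting applies directly. With that correction the argument goes through, and the Liggett--Schonmann--Stacey alternative you mention is also a clean way to handle the dependence.
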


The first lemma distinguishes an ``inside'' of $S_z$, consisting of
points separated from infinity by $S_z$. Finally, let $\Delta_z$
denote the union of $S_z$ and each point on the inside of $S_z$ which
is white and connected to $S_z$ by a white path.

Since each point in $S_z$ is white, it follows that $\Delta_z$ is all
white. That $\Delta_z$ almost surely satisfies the first three
properties stated in Section~\ref{secshell} is a consequence of the
above two lemmas. The final property follows from the following lemma.
The lemma is a slight variant of~(2.30) in~\cite{kesten86}, and is proved
similarly. For completeness, we present an argument also here.

\begin{lma}
Either every path between $y$ and $z$ in $\Z^d$ intersects both
$\Delta_y$ and~$\Delta_z$, or $\Delta_y\cap\Delta_z\neq\varnothing$.
\end{lma}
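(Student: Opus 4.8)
The plan is to argue contrapositively: assume $\Delta_y\cap\Delta_z=\emptyset$ (otherwise there is nothing to prove) and show that every path between $y$ and $z$ meets both $\Delta_y$ and $\Delta_z$. Since the roles of $y$ and $z$ are interchangeable, it suffices to prove that every such path meets $\Delta_y$. Throughout, write $\widehat S_y$ for the region enclosed by $S_y$, that is, $S_y$ together with all vertices it separates from infinity, and similarly $\widehat S_z$. Two routine facts about separating sets in $\Z^d$ will be used: $\widehat S_y$ is finite and connected (it is contained in a ball of radius $\diam(S_y)$, and any enclosed vertex connects to $S_y$ inside $\widehat S_y$ by following the initial segment of any escape path), and any vertex of $\widehat S_y$ with a neighbour outside $\widehat S_y$ must lie on $S_y$ (otherwise it would be joined to infinity within $\Z^d\setminus S_y$). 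Recall also that, as quoted above, $S_y\subseteq\Delta_y$ is connected and separates $y$ from infinity, so $y\in\widehat S_y$.

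First I would suppose, for contradiction, that a path $\gamma$ from $y$ to $z$ avoids $\Delta_y$, hence avoids $S_y$. Being connected and containing $y$, $\gamma$ lies in one of the finite components of $\Z^d\setminus S_y$ enclosed by $S_y$; in particular $z$ lies strictly inside $S_y$, so $z\in\widehat S_y$. Next I would locate $S_z$ relative to $S_y$: since $S_y$ and $S_z$ are disjoint connected sets, $S_z$ lies in a single component of $\Z^d\setminus S_y$. If that component is bounded, then an escape-path argument (any escape path from a vertex enclosed by $S_z$ first meets $S_z$, still inside $\widehat S_y$, and must afterwards meet $S_y$) gives $\widehat S_z\subseteq\mathrm{int}\,\widehat S_y$. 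If instead $S_z$ lies in the unbounded component of $\Z^d\setminus S_y$, then joining any vertex of $\widehat S_y$ to $z$ by a path inside the connected set $\widehat S_y$ — which avoids $S_z$ — shows that that vertex is enclosed by $S_z$, i.e. $\widehat S_y\subseteq\mathrm{int}\,\widehat S_z$. Either way, one enclosed region sits strictly inside the interior of the other; write $\widehat S_A\subseteq\mathrm{int}\,\widehat S_B$ with $\{A,B\}=\{y,z\}$.

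Now comes the decisive input. By construction $S_A$ contains a white vertex $w$ belonging to an infinite white cluster, so there is an all-white path $P$ from $w$ to infinity. Since $\widehat S_B$ is finite, $P$ must leave it; let $u$ be the inside endpoint of the first edge of $P$ that exits $\widehat S_B$. Then $u\in S_B$ by the boundary fact above, while the whole segment of $P$ from $w$ to $u$ stays inside $\widehat S_B$ and consists of white vertices. Since $w\in S_A\subseteq\widehat S_A\subseteq\mathrm{int}\,\widehat S_B$, this exhibits $w$ as a white vertex strictly inside $S_B$ that is joined to $S_B$ by a white path inside $\widehat S_B$; hence $w\in\Delta_B$. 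But also $w\in S_A\subseteq\Delta_A$, so $w\in\Delta_y\cap\Delta_z$, contradicting our assumption. This contradiction shows no path from $y$ to $z$ avoids $\Delta_y$, and by symmetry none avoids $\Delta_z$, which is the claim.

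The main obstacle is exactly the third paragraph. For an arbitrary pair of nested separating surfaces the lemma would simply be false — two concentric white ``spheres'' with an entirely black shell in between, with the inner centre as one basepoint, would violate it — so the argument cannot be purely topological. What rescues it is property (3), that every shell $S_z$ touches the (a.s.\ unique) infinite white cluster: this is precisely what forbids such a black barrier and produces the white escape path $P$ that bridges $S_A$ to $S_B$. Everything else — finiteness and connectedness of the enclosed regions, the inside/outside bookkeeping, and the two elementary boundary facts — is standard combinatorics of separating sets in $\Z^d$ and should be handled with only brief justification.
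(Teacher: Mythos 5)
Your proof is correct, and it rests on exactly the same key input as the paper's argument: the white vertex of the (inner) shell lying in the infinite white cluster supplies a white path to infinity that must cross the other shell, and that crossing produces a point of $\Delta_y\cap\Delta_z$. The paper packages this slightly differently --- it takes as witness the first point at which an arbitrary escape path from $z$ meets $S_y\cup S_z$, and then uses the connectedness and whiteness of the shell to link that point to the special vertex --- but this is bookkeeping rather than a different route, so your explicit nesting dichotomy for the enclosed regions $\widehat S_y$ and $\widehat S_z$ is an equivalent alternative.
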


\begin{pf}
Assume that there is a path $\gamma$ connecting $y$ and $z$ which does
not intersect $\Delta_y$. We will show that this implies that $\Delta
_y\cap\Delta_z\neq\varnothing$. Let $\Gamma$ be a path from $z$ to
infinity. $\Gamma$ must intersect both $S_y$ and $S_z$, since there
would otherwise be a path from $y$ to infinity (the concatenation of
$\gamma$ and $\Gamma$) not intersecting $S_y$ or a path from $z$ to
infinity ($\Gamma$) not intersecting $S_z$, thus contradicting
Lemma~\ref{lmakesten1}. Let $v$ denote the first point in $S_y\cup
S_z$ visited by $\Gamma$. We claim that either $v\in S_y\cap S_z$, or
$v$ is contained in just one of them, but connected to the other by a
white path. The claim, if true, would imply that $v\in\Delta_y\cap
\Delta_z$, since $v$ would either be contained in~$S_y$, or contained
in its interior but connected to $S_y$ by a white path.

It remains to prove the claim. Assume that $v\in S_z$. We will next
construct a white path from $v$ to $S_y$. Since $D_{n(z)}(z)$ contains
a vertex in an infinite white cluster, there has to be a vertex $u\in
S_z$ that is connected to infinity by a white path. A white path
connecting $v$ to infinity may now be obtained by first connecting $v$
to $u$ within~$S_z$, and thereafter connect $u$ to infinity. This path
will necessarily intersect $S_y$, since we otherwise would have found a
path from $y$ to infinity avoiding $S_y$. The remaining case when $v\in
S_y$ is analogous.
\end{pf}
\end{appendix}

\section*{Acknowledgement}
The author is grateful to Simon Griffiths for a helpful discussion.



%

\printaddresses


\begin{thebibliography}{21}
\bibitem{A-1}
%
\begin{barticle}[author]
\bauthor{\bsnm{Ahlberg},~\bfnm{Daniel}\binits{D.}}
(\byear{2015}).
\btitle{Asymptotics of first-passage percolation on
one-dimensional graphs}.
\bjournal{Adv. in Appl. Probab.}
\bvolume{47}
\bpages{182--209}.
\bid{mr={3327321}}
\end{barticle}
%
\bptok{imsref}%
\endbibitem

\bibitem{A-3}
%
\begin{barticle}[mr]
\bauthor{\bsnm{Ahlberg},~\bfnm{Daniel}\binits{D.}}
(\byear{2015}).
\btitle{Convergence towards an asymptotic shape in {f}irst-{p}assage {p}ercolation on
{c}one-{l}ike {s}ubgraphs of the {i}nteger {l}attice}.
\bjournal{J. Theoret. Probab.}
\bvolume{28}
\bpages{198--222}.
\bid{doi={10.1007/s10959-013-0521-0}, issn={0894-9840}, mr={3320965}}
\end{barticle}
%
\bptok{imsref}%
\endbibitem

\bibitem{A08thesis}
%
\begin{bmisc}[auto:STB|2014/02/12|14:17:21]
\bauthor{\bsnm{Ahlberg},~\bfnm{Daniel}\binits{D.}}
(\byear{2008}).
\bhowpublished{Asymptotics of first-passage percolation on
1-dimensional graphs.
Licentiate thesis, Univ. Gothenburg.}
\end{bmisc}
%
\bptok{imsref}%
\endbibitem

\bibitem{antpis96}
%
\begin{barticle}[mr]
\bauthor{\bsnm{Antal},~\bfnm{Peter}\binits{P.}} \AND
\bauthor{\bsnm{Pisztora},~\bfnm{Agoston}\binits{A.}}
(\byear{1996}).
\btitle{On the chemical distance for supercritical {B}ernoulli percolation}.
\bjournal{Ann. Probab.}
\bvolume{24}
\bpages{1036--1048}.
\bid{doi={10.1214/aop/1039639377}, issn={0091-1798}, mr={1404543}}
\end{barticle}
%
\bptok{imsref}%
\endbibitem

\bibitem{benkalsch03}
%
\begin{barticle}[mr]
\bauthor{\bsnm{Benjamini},~\bfnm{Itai}\binits{I.}},
\bauthor{\bsnm{Kalai},~\bfnm{Gil}\binits{G.}} \AND
\bauthor{\bsnm{Schramm},~\bfnm{Oded}\binits{O.}}
(\byear{2003}).
\btitle{First passage percolation has sublinear distance variance}.
\bjournal{Ann. Probab.}
\bvolume{31}
\bpages{1970--1978}.
\bid{doi={10.1214/aop/1068646373}, issn={0091-1798}, mr={2016607}}
\end{barticle}
%
\bptok{imsref}%
\endbibitem

\bibitem{chacha84}
%
\begin{bincollection}[mr]
\bauthor{\bsnm{Chayes},~\bfnm{J.~T.}\binits{J.~T.}} \AND
\bauthor{\bsnm{Chayes},~\bfnm{L.}\binits{L.}}
(\byear{1986}).
\btitle{Percolation and random media}.
In \bbooktitle{Ph\'enom\`enes Critiques, Syst\`emes Al\'eatoires, Th\'
eories de Jauge, {P}art {I}, {II} ({L}es {H}ouches, 1984)}
\bpages{1001--1142}.
\bpublisher{North-Holland},
\blocation{Amsterdam}.
\bid{mr={0880545}}
\end{bincollection}
%
\bptok{imsref}%
\endbibitem

\bibitem{chozha03}
%
\begin{barticle}[mr]
\bauthor{\bsnm{Chow},~\bfnm{Yunshyong}\binits{Y.}} \AND
\bauthor{\bsnm{Zhang},~\bfnm{Yu}\binits{Y.}}
(\byear{2003}).
\btitle{Large deviations in first-passage percolation}.
\bjournal{Ann. Appl. Probab.}
\bvolume{13}
\bpages{1601--1614}.
\bid{doi={10.1214/aoap/1069786513}, issn={1050-5164}, mr={2023891}}
\end{barticle}
%
\bptok{imsref}%
\endbibitem

\bibitem{coxdur81}
%
\begin{barticle}[mr]
\bauthor{\bsnm{Cox},~\bfnm{J.~Theodore}\binits{J.~T.}} \AND
\bauthor{\bsnm{Durrett},~\bfnm{Richard}\binits{R.}}
(\byear{1981}).
\btitle{Some limit theorems for percolation processes with necessary
and sufficient conditions}.
\bjournal{Ann. Probab.}
\bvolume{9}
\bpages{583--603}.
\bid{issn={0091-1798}, mr={0624685}}
\end{barticle}
%
\bptok{imsref}%
\endbibitem

\bibitem{cragaumou09}
%
\begin{barticle}[mr]
\bauthor{\bsnm{Cranston},~\bfnm{M.}\binits{M.}},
\bauthor{\bsnm{Gauthier},~\bfnm{D.}\binits{D.}} \AND
\bauthor{\bsnm{Mountford},~\bfnm{T.~S.}\binits{T.~S.}}
(\byear{2009}).
\btitle{On large deviation regimes for random media models}.
\bjournal{Ann. Appl. Probab.}
\bvolume{19}
\bpages{826--862}.
\bid{doi={10.1214/08-AAP535}, issn={1050-5164}, mr={2521889}}
\end{barticle}
%
\bptok{imsref}%
\endbibitem

\bibitem{erdos49}
%
\begin{barticle}[mr]
\bauthor{\bsnm{Erd{\"o}s},~\bfnm{P.}\binits{P.}}
(\byear{1949}).
\btitle{On a theorem of {H}su and {R}obbins}.
\bjournal{Ann. Math. Stat.}
\bvolume{20}
\bpages{286--291}.
\bid{issn={0003-4851}, mr={0030714}}
\end{barticle}
%
\bptok{imsref}%
\endbibitem

\bibitem{erdos50}
%
\begin{barticle}[mr]
\bauthor{\bsnm{Erd{\"o}s},~\bfnm{P.}\binits{P.}}
(\byear{1950}).
\btitle{Remark on my paper ``{O}n a theorem of {H}su and {R}obbins.''}
\bjournal{Ann. Math. Stat.}
\bvolume{21}
\bpages{138}.
\bid{issn={0003-4851}, mr={0032970}}
\end{barticle}
%
\bptok{imsref}%
\endbibitem

\bibitem{garmar07}
%
\begin{barticle}[mr]
\bauthor{\bsnm{Garet},~\bfnm{Olivier}\binits{O.}} \AND
\bauthor{\bsnm{Marchand},~\bfnm{R{\'e}gine}\binits{R.}}
(\byear{2007}).
\btitle{Large deviations for the chemical distance in supercritical
{B}ernoulli percolation}.
\bjournal{Ann. Probab.}
\bvolume{35}
\bpages{833--866}.
\bid{doi={10.1214/009117906000000881}, issn={0091-1798}, mr={2319709}}
\end{barticle}
%
\bptok{imsref}%
\endbibitem

\bibitem{grikes84}
%
\begin{barticle}[mr]
\bauthor{\bsnm{Grimmett},~\bfnm{Geoffrey}\binits{G.}} \AND
\bauthor{\bsnm{Kesten},~\bfnm{Harry}\binits{H.}}
(\byear{1984}).
\btitle{First-passage percolation, network flows and electrical resistances}.
\bjournal{Z. Wahrsch. Verw. Gebiete}
\bvolume{66}
\bpages{335--366}.
\bid{doi={10.1007/BF00533701}, issn={0044-3719}, mr={0751574}}
\end{barticle}
%
\bptok{imsref}%
\endbibitem

\bibitem{hamwel65}
%
\begin{bincollection}[mr]
\bauthor{\bsnm{Hammersley},~\bfnm{J.~M.}\binits{J.~M.}} \AND
\bauthor{\bsnm{Welsh},~\bfnm{D.~J.~A.}\binits{D.~J.~A.}}
(\byear{1965}).
\btitle{First-passage percolation, subadditive processes, stochastic
networks, and generalized renewal theory}.
In \bbooktitle{Proc. {I}nternat. {R}es. {S}emin., {S}tatist. {L}ab.,
{U}niv. {C}alifornia, {B}erkeley, {C}alif.}
\bpages{61--110}.
\bpublisher{Springer},
\blocation{New York}.
\bid{mr={0198576}}
\end{bincollection}
%
\bptok{imsref}%
\endbibitem

\bibitem{hsurob47}
%
\begin{barticle}[mr]
\bauthor{\bsnm{Hsu},~\bfnm{P.~L.}\binits{P.~L.}} \AND
\bauthor{\bsnm{Robbins},~\bfnm{Herbert}\binits{H.}}
(\byear{1947}).
\btitle{Complete convergence and the law of large numbers}.
\bjournal{Proc. Natl. Acad. Sci. USA}
\bvolume{33}
\bpages{25--31}.
\bid{issn={0027-8424}, mr={0019852}}
\end{barticle}
%
\bptok{imsref}%
\endbibitem

\bibitem{kesten86}
%
\begin{bincollection}[mr]
\bauthor{\bsnm{Kesten},~\bfnm{Harry}\binits{H.}}
(\byear{1986}).
\btitle{Aspects of first passage percolation}.
In \bbooktitle{\'{E}cole d'\'Et\'e de Probabilit\'es de
{S}aint-{F}lour, {XIV}---1984}.
\bseries{Lecture Notes in Math.}
\bvolume{1180}
\bpages{125--264}.
\bpublisher{Springer},
\blocation{Berlin}.
\bid{doi={10.1007/BFb0074919}, mr={0876084}}
\end{bincollection}
%
\bptok{imsref}%
\endbibitem

\bibitem{kesten93}
%
\begin{barticle}[mr]
\bauthor{\bsnm{Kesten},~\bfnm{Harry}\binits{H.}}
(\byear{1993}).
\btitle{On the speed of convergence in first-passage percolation}.
\bjournal{Ann. Appl. Probab.}
\bvolume{3}
\bpages{296--338}.
\bid{issn={1050-5164}, mr={1221154}}
\end{barticle}
%
\bptok{imsref}%
\endbibitem

\bibitem{kingman68}
%
\begin{barticle}[mr]
\bauthor{\bsnm{Kingman},~\bfnm{J.~F.~C.}\binits{J.~F.~C.}}
(\byear{1968}).
\btitle{The ergodic theory of subadditive stochastic processes}.
\bjournal{J. Roy. Statist. Soc. Ser. B}
\bvolume{30}
\bpages{499--510}.
\bid{issn={0035-9246}, mr={0254907}}
\end{barticle}
%
\bptok{imsref}%
\endbibitem

\bibitem{richardson73}
%
\begin{barticle}[mr]
\bauthor{\bsnm{Richardson},~\bfnm{Daniel}\binits{D.}}
(\byear{1973}).
\btitle{Random growth in a tessellation}.
\bjournal{Proc. Cambridge Philos. Soc.}
\bvolume{74}
\bpages{515--528}.
\bid{mr={0329079}}
\end{barticle}
%
\bptok{imsref}%
\endbibitem

\bibitem{talagrand95}
%
\begin{barticle}[mr]
\bauthor{\bsnm{Talagrand},~\bfnm{Michel}\binits{M.}}
(\byear{1995}).
\btitle{Concentration of measure and isoperimetric inequalities in
product spaces}.
\bjournal{Inst. Hautes \'Etudes Sci. Publ. Math.}
\bvolume{81}
\bpages{73--205}.
\bid{issn={0073-8301}, mr={1361756}}
\end{barticle}
%
\bptok{imsref}%
\endbibitem

\bibitem{zhang10}
%
\begin{barticle}[mr]
\bauthor{\bsnm{Zhang},~\bfnm{Yu}\binits{Y.}}
(\byear{2010}).
\btitle{On the concentration and the convergence rate with a moment
condition in first passage percolation}.
\bjournal{Stochastic Process. Appl.}
\bvolume{120}
\bpages{1317--1341}.
\bid{doi={10.1016/j.spa.2010.03.001}, issn={0304-4149}, mr={2639748}}
\end{barticle}
%
\bptok{imsref}%
\endbibitem

\end{thebibliography}
\end{document}